\newtheorem{theorem}{Theorem}[section]
\newtheorem{definition}{Definition}[section]
\newtheorem{remark}{Remark}
\def\blfootnote{\xdef\@thefnmark{}\@footnotetext}
\newcommand{\abs}[1]{\left|#1\right|}
\newcommand{\I}{{\rm i}}
\newcommand{\complex}{\mathbb{C}}
\newcommand{\eval}[1]{\upharpoonright_{#1}}
\newcommand{\cc}[1]{\overline{#1}}
\newcommand{\nats}{\mathbb{N}}
\newcommand{\reals}{\mathbb{R}}
\newcommand{\inner}[2]{\left\langle#1,#2\right\rangle}
\newcommand{\norm}[1]{\left\|#1\right\|}
\DeclareMathOperator{\dom}{dom}
\def\widehatdntau{\widehat{\partial}^{(\tau)}_n}
\def\hom{\text{\rm hom}}
\def\soft{{\rm soft}}
\def\stiff{{\rm stiff}}
\def\e{\varepsilon}
\begin{document}
\title{\sc Functional model for generalised resolvents and its application to time-dispersive media}

\author[1]{Kirill D. Cherednichenko}
\author[2]{Yulia Yu. Ershova}
%\author[2]{Alexander Kiselev}
\author[3]{Sergey N. Naboko}
%%\author[5]{Luis O. Silva}
\affil[1]{Department of Mathematical Sciences, University of Bath, Claverton Down, Bath, BA2 7AY, UK}
\affil[2]{Department of Mathematics, Texas A\&M University, College Station, TX 77843-3368, USA}
%\affil[2]{Dragomanov National Pedagogical University, 9 Pyrohova St, Kyiv, 01601, Ukraine}
%%\affil[2]{Department of Mathematics, St.\,Petersburg State University of Architecture and Civil Engineering, 2-ya Krasnoarmeiskaya St. 4, 190005 St.\,Petersburg, Russia}
%%\affil[3]{ITMO University, Kronverksky Pr.\,49, bldg.\,A, St.\,Petersburg, 197101, Russia}
\affil[3]{Department of Mathematics and Mathematical Physics, Physics Faculty, 
	St.\,Petersburg State University, Peterhoff, St.\,Petersburg, Russia}
%%\affil[5]{Departamento de F\'{i}sica Matem\'{a}tica, Instituto de Investigaciones en Matem\'aticas Aplicadas y en Sistemas, Universidad Nacional Aut\'onoma de M\'exico, C.P. 04510, Ciudad de M\'exico, M\'exico}

\maketitle

\begin{abstract}
Motivated by recent results concerning the asymptotic behaviour of  differential operators with highly contrasting coefficients, whose effective descriptions have involved generalised resolvents, we construct the functional model for a typical example of the latter. This provides a spectral representation for the generalised resolvent, which can be utilised for further analysis, in particular the construction of the scattering operator in related wave propagation setups.
\end{abstract}

\par{\raggedleft\slshape In memoriam Sergey Naboko\par}% via the theory of boundary triples}

\section{From resonant composites to generalised resolvents}
%	: motivation for the problem to be considered: f}
%% lots of promises -- {\color{red} Doesn't motivation in the next section suffice?}}

Recent advances in the multiscale analysis of differential equations modelling heterogeneous media with high contrast (``high contrast homogenisation") have shown that when the contrast between the material properties of individual components is scaled appropriately with the typical size of heterogeneity (e.g., period in the case of periodic media), the effective description exhibits frequency dispersion (i.e., the dependence of the wavelength on frequency) or, equivalently in the time domain, a memory-type formulation with a convolution kernel, see \cite{Zhikov2000, Zhikov2005, CherKis, CherCoop, ChEK_future, GrandePreuve}. From the physical perspective, it can be viewed as the result of a resonant behaviour of one of the components of such a composite medium, when the typical length-scale of waves (in the case of an unbounded medium) or eigenmodes (in the case of a bounded region) is comparable to the typical size of heterogeneity. 

The need to quantify the above effect for various classes of boundary value problems (BVPs), which ultimately aims at addressing the r\^{o}le of the underlying microscopic resonance in the overall behaviour of a class of physical systems, has also motivated the development of functional analytic frameworks for the analysis of wave scattering and effects of length-scale interactions for parameter-dependent BVP, see \cite{CherKisSilva, CherKisSilva1, CherKisSilva2, CherKisSilva3}. The approach of the latter works was inspired by a treatment of BVP going back to the so-called Birman-Kre\u\i n-Vishik methodology \cite{Birman, Krein1, Krein2, Vishik} and its recent  development by Ryzhov \cite{Ryzh_spec}, rooted in an earlier construction of the functional model of perturbation theory by one of the authors \cite{MR0500225, MR573902}. The theory of boundary triples, which was introduced in  \cite{Gor, DM, Ko1, Koch}, provides a convenient functional analytic framework for the implementation of the ideas introduced by Birman, Kre\u\i n, and Vishik, as shown in a number of parallel recent developments \cite{Grubb_Robin,Grubb_mixed,BehrndtLanger2007,Gesztesy_Mitrea, Ryzh_spec, BMNW2008}; see also the seminal contributions by Calkin \cite{Calkin},
%%started to appear, it has transpired that, suitably modified, the boundary triples approach nevertheless admits a natural generalisation to the BVP setup, see also the seminal contributions by
Boutet de Monvel \cite{BdeM}, Birman and Solomyak \cite{Birman_Solomyak}, Grubb \cite{Grubb_book}, and Agranovich \cite{Agranovich}.

In the process of analysing BVP with high contrast using Ryzhov's method, the r\^{o}le of the generalised resolvent obtained by restricting the problem to the ``soft", or resonant, component has been made transparent: this generalised resolvent is the solution operator of a BVP with a constant symbol and a boundary condition dependent on the spectral parameter. 
%%%nlink between the resonant behaviour of high-contrast media and the additional dependence on the spectral parameter 
%%%%%(in comparison to the ``standard" situation of moderate contrast) has been made transparent, as one establishes an expression of the generalised-resolvent type for the operator on the resonant component. 
The passage to the limit as the contrast goes to infinity then naturally leads to a BVP on the soft component with a boundary condition linear in the spectral parameter \cite{CherKisSilva3}. This form of the effective problem is unsurprising from the point of view of the classical compactness argument \cite{Zhikov2005}: the solution gradients (corresponding to, e.g., the strain tensor in elasticity) are forced to vanish on the ``stiff" component, i.e.,  where the material parameter (such as the elastic modulus) is large. 
%%%Whichever rationale to obtaining the ``effective" description is taken, 
%%is taken resulting ``effective" description is 
Notably, problems of this type, where the dependence of the a boundary condition on the spectral parameter is modelled by a general Herglotz function, have also naturally appeared in the analysis of time-dispersive media \cite{Figotin_Schenker_2005, Figotin_Schenker_2007b}, where generalised resolvents feature prominently.   

The operator-theoretic study of generalised resolvents was initiated by Neumark \cite{Naimark1940, Naimark1943}
%%% who established that they form exactly the class of mappings obtained by ``truncating" the resolvent of a self-adjoint operator   
%%represented as the operator of multiplication by $(\cdot-z)^{-1}$  in a Hilbert space with respect to a spectral measure, 
%%%in an ``out-of space extension" of the original Hilbert space. 
and further refined by \v{S}traus \cite{Strauss, Strauss1968, Strauss_survey}, who developed an abstract construction of the functional model, in particular applicable to the study of generalised resolvents. 
%%(``\v{S}traus characteristic function"), 
This provides for an implicit link to the scattering theory for problems with impedance-type boundary conditions,  i.e., those that feature a non-constant function of the spectral parameter $z$ (which represents the square of frequency in the context of wave propagation). In the Sturm-Liouville context, impedance-type problems have been studied by a number of authors, see in particular \cite{Shkalikov_1983, Mennicken_Moeller} and references therein. 

The characteristic function of Livshitz \cite{livshitz} and the spectral form of the functional model for dissipative operators due to Pavlov \cite{MR0510053} are explicitly connected with the scattering theory, see \cite{MR0206711, AdamyanPavlov}. Therefore, it appears reasonable to pose the question of explicit construction of a functional model in the spirit of Pavlov for generalised resolvents \cite{CEKS2022},
 %%%its relationship with the the characteristic function of \v{S}traus, 
 and to study its implications for impedance-type BVP. Furthermore, in relation to the kind of generalised impedance problems that emerge in the context of resonant homogenisation, it seems natural  to also explore appropriate analogues of Pavlov's model of potentials of zero radius with an internal structure \cite{Pavlov_internal_structure, Pavlov_explicitly_solvable}, resulting in an explicit description of a class of generalised resolvents quantifying the interactions between the resonant and non-resonant parts of the medium. To the best of our knowledge, the present work is the first step in implementing the above programme.  

%%\cite{Drogobych, MR0365199}
%%%As the analysis of high-contrast homogenisation  construction of the scattering operator for impedance-type boundary-value problems and its application to wave propagation remain outstanding at present.  

%%Motivated by the above results on high-contrast homogenisation and the recent progress in the operator-theoretic analysis of BVP, we initiate the development of a framework for spectral analysis of problem with ``impedance-type" boundary conditions.

\section{Motivation for the problem to be analysed}

The problems of the type we consider in this paper have recently appeared in a number of seemingly unrelated contexts, ranging from double-porosity homogenisation for scalar and vector PDEs \cite{ChEK_future, ChKVZ} through dimension reduction in thin networks \cite{thin} to quantum graphs \cite{Physics, GrandePreuve}. In the PDE world, a prototypical model is derived in \cite{CherKisSilva3}. 

Consider a smooth bounded domain $\Omega\subset{\mathbb R}^d,$ $d\ge 2,$ a simply connected inclusion $\Omega_-\subset\Omega$ with a $C^{1,1}$ boundary $\Gamma$  located at a positive distance from $\partial\Omega,$ and denote $\Omega_+:=\Omega\setminus\overline{\Omega}_-.$  
%%some boundary conditions on the outer boundary $\partial\Omega,$  say
%%%$\partial u/\partial n=0$ on the %%outer boundary $\partial\Omega,$
%%%Bu_+ 
Furthermore, consider the space $\widetilde{H}=L^2(\Omega_+)\oplus \mathbb C$ and its linear subset 
%%the following {\color{red}linear subset of $L^2(\Omega):$}
\begin{equation}
	\label{eq:domain_fin_modI}
	\dom(A)=\biggl\{
	%(u_+,\eta)^\top\in
	\binom{u_+}{\beta}\in \widetilde{H}:\ u_+\in H^2(\Omega_+),\ \ u_+|_\Gamma=\frac{\beta}{\sqrt{|\Omega_-|}}{\mathbbm 1}_\Gamma, \ \ \dfrac{\partial u_+}{\partial n_+}\biggr\vert_{\partial\Omega}=0\biggr\},
	%%u|_\Gamma=\frac{1}{|\Gamma|}\biggl(\int_{\Gamma}u\biggr){\mathbbm 1}
	%%\text{ and }  \eta = \frac{\sqrt{|\Omega_-|}}{|\Gamma|}\biggl(\int_\Gamma u\biggr)\biggr\},
\end{equation}
where $u|_\Gamma$
%\in L^2(\Gamma)$
is the trace of the function $u,$ ${\mathbbm 1}_\Gamma$ is the unity function on $\Gamma,$ and $n_+$ is the exterior normal to $\partial\Omega.$
%%$\kappa:=|\Omega_-|^{1/2}/|\Gamma|^{1/2}$ and $\psi_0= |\Gamma|^{-1/2} \mathbbm 1.$
%and $\psi_\tau= |\Gamma|^{-1/2}\exp(-{\rm i}\tau\cdot x)|_\Gamma.$
On $\dom(A)$ we set the action of the operator $A$ by the formula
%%\vskip -0.3cm
\begin{equation*}
	%%\label{eq:operator_fin_modI}
	%\begin{equation}\label{eq:operator_fin_modII}
	A\binom{u_+}{\eta}=
	\left(\begin{array}{c}-\Delta u_+\\[0.6em]
		\dfrac{1}{\sqrt{|\Omega_-|}}\mathop{\mathlarger{\mathlarger{\int}}}_{\!\!\!\Gamma}\dfrac{\partial u_+}{\partial n_+}
	\end{array}\right).
\end{equation*}

In the context of the paper \cite{GrandePreuve}, which concerns periodic graphs with high contrast, an analogue of the operator $A$ emerges. We focus on that for the remainder of this paper. This choice allows us to carry out all the necessary computations explicitly, thus facilitating an added transparency of the exposition. (We expect the key outcomes of our study to be transferable to the PDE setup, as the structure of the operators involved remains unchanged -- the related analysis will be the subject of a future publication.)    

 For differentiation $\partial$ and $\tau\in[-\pi,\pi),$ consider the %%``${\rm i}\tau$-shifted"
 operator ${\partial_\tau}:=\partial+{\rm i}\tau.$  Problems of multiscale analysis of the behaviour of heterogeneous media with high contrast lead to differential operators on an interval $(0,l)$ of the form
%% ($\tau\in[\pi,\pi)$)   
\begin{equation}
A\binom{u}{\beta}=\left(\begin{matrix}-\partial_\tau^2 u\\[0.2em]
	-\eta^{-1}Du+\gamma\eta^{-2}\beta\end{matrix}\right),
\label{Aop}
\end{equation}
where  $\eta\in{\mathbb R}\setminus\{0\},$ $\gamma>0$, and
\begin{equation}
%%{\partial_\tau}:=\partial+{\rm i}\tau,\qquad
Du:={\partial_\tau}u(0)-\omega{\partial_\tau}u(l), \qquad\omega\in{\mathbb C},\ |\omega|=1.
\label{Du_form}
\end{equation}
%% -- some unimodular weight.  
The domain of the operator $A$ in $L^2(0, l)\oplus{\mathbb C}$ is defined as follows:
\begin{equation}
{\rm dom}(A)=\left\{\binom{u}{\beta}\in W^{2,2}(0,l)\oplus{\mathbb C}:\  u(0)=\omega u(l)=\eta^{-1}\beta\right\}.
\label{domain}
\end{equation}
The pair $(u,\beta)^\top$ describes the approximation of the solution to a second-order differential equation with contrasting parameters in a ``resonant" asymptotic regime, see our recent papers \cite{Physics, GrandePreuve} as well as \cite{ChEK_future} for a similar object in the PDE context. The components $u$ and $\beta$ correspond to the leading-order behaviour on the ``soft" (resonant) and ``stiff" parts of the composite medium, capturing the fact that the soft part supports vibrations of relatively small wavelengths in relation to the stiff part. We next describe the context in which (\ref{Aop}) emerges in more detail.

%%%first component in the expression (\ref{Aop}) corresponds to the values of the function $u,$ which describes the asymptotic approximation of a heterogeneous   

\subsection{The operator $A$ as the dilation of a generalised resolvent}
\label{gen_res_sec}

The operator (\ref{Aop})--(\ref{domain}) is the \v{S}traus-Neumark dilation for the solution operator ${\mathcal R}(z)$ of the problem
\begin{equation}
	\begin{aligned}
		&-\partial_\tau^2u-zu=f,\\[0.3em]
		&u(0)=\omega u(l),\qquad Du=(\gamma-\eta^2z)u(0),
	\end{aligned}
	\label{gen_res}
\end{equation}
where the relationship between $u(0)$ (and hence $u(l)$) to $\beta$ given in (3) has been used. 
%%in (\ref{full_problem}) via 
%%$u(0)=\beta/\eta.$ 
Its action is the composition of the solution to  
\begin{equation*}
A\binom{u}{\beta}-z\binom{u}{\beta}=\binom{f}{0}
%\label{full_problem}
\end{equation*}
and the orthogonal projection $P_{\widetilde{H}}$ onto $\widetilde{H}:=L^2(0,l)\oplus\{0\}.$
%This relationship between $A$ and $R(z)$ can be written 
 On the abstract level, this is expressed as follows:
\begin{equation}
{\mathcal R}(z)=P_{\widetilde{H}}(A-zI)^{-1}\bigr\vert_{\widetilde{H}},
\label{gen_res_rep}
\end{equation}
where $\widetilde{H}$ is identified with $L^2(0,l),$ and therefore in the terminology introduced by \cite{Naimark1940, Strauss}, the operator ${\mathcal R}(z)$ is a generalised resolvent.

%%%Using the terminology introduced by \cite{Naimark1940, Strauss}, the operator $R(z)$ is a generalised resolvent.

	%%We will find it necessary to consider not only proper operator extensions $A_B$, but also those for which the parameterising operator $B$ depends on the spectral parameter $z$. 
	%%This amounts to considering 
	Note that in the BVP (\ref{gen_res}) the spectral parameter is present not only in the differential equation but also in the boundary conditions. In fact, (\ref{gen_res}) can be written in the form\footnote{Indeed, one can set, e.g. (see \cite[Appendix B]{GrandePreuve}), 
\[
\widetilde{\Gamma}_1u=\frac{1}{\sqrt{2}}\left(\begin{array}{c}\partial_\tau u(0)-\omega\partial u(l)\\[0.4em]
	-u(0)+\omega u(l)\end{array}\right),\qquad
\widetilde{\Gamma}_0u=\frac{1}{\sqrt{2}}\left(\begin{array}{c}u(0)+\omega u(l)\\[0.4em]
	\partial_\tau u(0)+\omega\partial u(l)
	\end{array}\right).
\]	
Then the equation (\ref{gen_prob2}) with 
\[
B(z)=\left(\begin{array}{cc}(\gamma-\eta^2z)/2&0\\[0.4em]0&0\end{array}\right)
\]
is shown to be equivalent to (\ref{gen_res}).}
	%, i.e., BVPs of the type
	%\begin{gather*}
	\begin{equation}
		\widetilde{A}_{\text{\rm max}}u-z u =f,\qquad 
	%%	\ \ \ \
	%%	u\in {\rm dom} (\widetilde{A}_{\text{\rm max}}),\ \ \
			\widetilde{\Gamma}_1 u = B(z)\widetilde{\Gamma}_0 u,\quad\qquad u\in W^{2,2}(0,l),
		\label{gen_prob2}
	\end{equation}
where  $\widetilde{A}_{\rm max}$ is the operator generated by the differential expression $\partial_\tau^2$ on the domain $W^{2,2}(0, l),$ appropriately chosen operators $\widetilde{\Gamma}_0,$ $\widetilde{\Gamma}_1:W^{2,2}(0, l)\to{\mathbb C}^2$ satisfy Green's identity for all $u,v\in W^{2,2}(0,l):$
\begin{equation*}
	\begin{aligned}
\int_{0}^l\bigl(-\partial_\tau^2u\overline{v}+u\overline{\partial_\tau^2v}\bigr)\equiv\bigl\langle\widetilde{A}_{\rm max}u, v\bigr\rangle_{L^2(0, l)}-\bigl\langle u,\widetilde{A}_{\rm max}v\bigr\rangle_{L^2(0,l)}
%%\\[0.4em]
%%&
=\bigl\langle\widetilde{\Gamma}_1u,\widetilde{\Gamma}_0v\bigr\rangle_{{\mathbb C}^2}-\bigl\langle\widetilde{\Gamma}_0u,\widetilde{\Gamma}_1v\bigr\rangle_{{\mathbb C}^2},
%%\forall u,v\in W^{2,2}(0,l),
\end{aligned}
%\label{tile_Green}
\end{equation*}
and $-B(z)$ is an operator-valued $R$-function, i.e., $B(z)$ is analytic in $\mathbb C_+\cup \mathbb C_-$ with $\Im z \Im B(z)\leq 0.$  
 %\end{gather*}
	The abstract result of  
	%%$R(z)$ 
 \cite{Strauss} ensures that the solution to any BVP with this property is a generalised resolvent, i.e., it admits a representation of the form (\ref{gen_prob2}).
 %%of the form  (\ref{gen_res_rep})
 Thus the link between (\ref{Aop})--(\ref{domain}) and (\ref{gen_res_rep}) (hence (\ref{gen_res})) is a particular example of a general result of Neumark and \v{S}traus. On the other hand, problems of both types (\ref{gen_prob2}) and (\ref{Aop})--(\ref{domain}) emerge in the process of deriving operator-norm asymptotic approximations for problems of high contrast (``resonant'') homogenisation \cite{Physics, GrandePreuve}. In particular, the problem (\ref{gen_res}) emerges from the asymptotic analysis of the generalised resolvent obtained by projecting the original operator onto the soft component, whereas the problem (\ref{Aop})--(\ref{domain}) turns out to be (up to a unitary equivalence) the asymptotic limit of the family of the complete operator resolvents. While on the abstract level it is not possible to show that the convergence of the generalised resolvents implies the convergence of their Neumark-\v{S}traus dilations, this happens to be the case in all homogenisation setups studied to date. 
 
 	Over the recent years there have been several attempts to provide an explicit construction of the Neumark-\v{S}traus dilation for several classes of generalised resolvents; among the relevant works we would like to point out \cite{Shkalikov_1983, OQS_Malamud, Trace_formulae_Malamud, Figotin_Schenker_2005, Figotin_Schenker_2007b}. This activity has been motivated by the growing interest to the mathematical analysis of highly dispersive media. However, all these constructions stop short of obtaining the functional model representation for the said dilation.
 	
 	On the other hand, in many physically relevant contexts, including that of homogenisation, families of generalised resolvents emerge in a natural way for which the asymptotic expansion with respect to the (small) length-scale parameter yields a leading-order term that can be represented by a generalised resolvent with a linear dependence on the spectral parameter $z.$ From the physics perspective, this corresponds to an effective model of the medium that includes zero-range potentials with an internal structure \cite{AdamyanPavlov, Pavlov_internal_structure}. It can be argued that the linearity of the  impedance in $z$  is essentially equivalent to the model where these zero-range potentials represent point dipoles \cite{CherKis}. If one takes into account higher-order terms in the mentioned asymptotic expansion, one is able to pass from dipole models of effective media to more general multipole ones. While in the present work we focus on the dipole case, the development of the general multipole theory is extremely topical from the point of view of describing metamaterials and can be treated on the basis of the mathematical approach presented here, with a natural replacement of the scalar model by a matrix one.

 	In summary, the ``dipole" homogenisation regime  offers a simple, yet physically relevant in certain frequency regimes, model for which the construction of the dilation can be carried out explicitly, by essentially adding a one-dimensional subspace. 
 	%%This is due to the fact that the corresponding generalised resolvent pertains to a  boundary value problem that is linear in the spectral parameter $z.$}

 	%%   it is shown, as in (\ref{gen_res}), to be linear in the spectral parameter $z.$} 
    This suggests, in particular, that the formulation (\ref{gen_res}) is of a generic type, applicable to a variety of physical contexts, including the Maxwell system of electromagnetism and linearised elasticity. We anticipate that in all those setups it will yield new interesting physical and mathematical effects, which, in our opinion, justifies our interest to such a simple-looking BVP as (\ref{gen_res}).   
   %%thereby leading to the operator (\ref{Aop})--(\ref{domain}) by virtue of the mentioned link. 
  
   We next consider a periodic metric graph that, upon the application of a suitable unitary mapping (``Gelfand transform"), yields an operator of the form (\ref{gen_res}). We then introduce a boundary triple that leads to the so-called $M$-function, which is the key ingredient of the functional model constructed Sections \ref{sec:functional-model},\,\ref{expl_sec}.    

\subsection{Infinite-graph setup and Gelfand transform}
\label{sec_2}
%For each $\varepsilon>0,$ c
Consider a graph ${\mathbb G}_\infty,$ periodic in one direction, so that ${\mathbb G}_\infty+\ell={\mathbb G}_\infty,$ where $\ell$ is a fixed vector defining the graph axis.
%%\[
%%{\mathbb G}_\infty:=\bigcup\limits_{m\in{\mathbb Z}}({\mathbb G}+\varepsilon m).
%%\]
%Let the $m$-th cell of periodicity contain graph ${\mathbb G}_m$ (each finite
%graph ${\mathbb G}_m$ is equal to ${\mathbb G}_0$)
Let the periodicity cell  ${\mathbb G}_\varepsilon$ be a finite compact graph of total length $\varepsilon\in(0,1),$ and denote by
$e_j,$ $j=1,2,\dots n,$ $n\in{\mathbb N},$ its edges. For each $j=1,2,\dots, n,$ we identify $e_j$ with the interval $[0,\varepsilon l_j],$ where $\varepsilon l_j$ is the length of $e_j.$  We associate with the graph ${\mathbb G}_\infty$ the Hilbert space
%$L_2$ on ${\mathbb G}$ such that
%this space
$$
L_2({\mathbb G}_\infty):=\bigoplus\limits_{{\mathbb Z}}\bigoplus\limits_{j=1}^n L_2(0, \varepsilon l_j).
$$
%infinite sum of $L_2$ on each particular edge of graph ${\mathbb G}$
%($L_2([0,l^{m}_j])$,
%where $l^m_j$ -- the length of j-th edge of graph ${\mathbb G}_0$ of m-th cell of periodicity).

Consider also a family $\{A^\varepsilon\}_{\varepsilon>0}$ of operators 
%%$A^\varepsilon,$ $\varepsilon>0,$ 
in $L_2({\mathbb G}_\infty),$ generated by second-order differential expressions
%%\begin{equation*}
$-a^\varepsilon\partial^2,$
	%\label{diff_operator}
%%\end{equation*}
with positive ${\mathbb G}_\varepsilon$-periodic coefficients  $a^\varepsilon$ on ${\mathbb G}_\infty,$
%$$
%-\left(a^\varepsilon\left(\frac{x}{\varepsilon}\right)u'\right)'-zu=f, \qquad f\in L^2({\mathbb G}), \quad z\in \mathbb{C},
%$$
%%defined on the edges ${\mathfrak e}_j=[0,\varepsilon l_j],$ $j=1,2, ..., n,,$ of the graph ${\mathbb G},$
%depend on $\varepsilon$ as well) and
and defined on the domain ${\rm dom}(A^\varepsilon)$ describing the ``natural" coupling conditions at the vertices of
%the ``$\varepsilon$-contraction" $\varepsilon{\mathbb G}_\infty$ of the graph
${\mathbb G}_\infty:$
\begin{equation}
	{\rm dom}(A^\varepsilon)=
	\Bigl\{
	%\bigoplus\limits_{m\in\mathbb{Z}}
	u\in\bigoplus\limits_{e\in{\mathbb G}_\infty}W^{2,2}\bigl(e):\ u
	%\text{ and } \sum_{e_j\ni V}{\partial_\tau}_jv
	%\partial_n u
	\text{\ continuous,}\ \sum_{e\ni V}\sigma_ea^\varepsilon u'(V)=0\ \ 
	%\ {\rm is\ subject\ to\ conditions\ at}\ V^m_i,\,
	%j=1,\dots, n\ \
	\forall V\in{\mathbb G}_\infty\Bigr\}.
	%\quad\quad{\mathfrak e}^m_j:=\varepsilon\widetilde{\mathfrak e}^m_j.
	\label{Atau}
\end{equation}
In (\ref{Atau}) the summation is carried out over the edges $e$
%of ${\mathbb G}_\infty$
sharing the vertex $V,$ the coefficient $a^\varepsilon$ in the vertex condition is calculated on the edge $e,$ and $\sigma_{ e}=-1$ or $\sigma_{e}=1$ for $e$  incoming or outgoing  for $V,$ respectively. %{\color{red} The coefficients $a^\varepsilon$ are not defined anywhere.}
The matching conditions \eqref{Atau} represent the combined conditions of continuity of the function and of vanishing sums of its co-normal derivatives at all vertices (i.e., the so-called Kirchhoff conditions).
%%and $\partial^{(\ta_n u$ denotes the ``co-derivative''  $a^\varepsilon(u'+{\rm i}\tau),$
%of the function $u,$
%%where $a^\varepsilon$ is calculated on the edge $e.$
% on the corresponding edge ${\mathfrak e}^m_j$ of the graph, calculated at the vertex $V_j,$ {\it i.e.} the expression $(a^\varepsilon)^m_j(u'+{\rm i}\tau),$ $\tau\in(-\pi,\pi],$ where $(a^\varepsilon)_j^m$ is the value of the coefficient of the expression (\ref{diff_operator}) on the edge ${\mathfrak e}^m_j.$
%%%\subsection{Gelfand transform}
%%%\label{sec_3}
%%%\label{Gelfand_section}
%%We seek to apply the one-dimensional Gelfand transform

Applying to the operators $A^\varepsilon$ a suitable version of the Gelfand transform \cite{Gelfand, KisRyad}, one obtains a two-parametric family of operators $A^\varepsilon_\tau,$ $\tau\in[-\pi,\pi) ,$ $\varepsilon>0,$ defined on the space of $L^2$-functions on a ``unit cell" $\mathbb G$ of size one, obtained from the ``$\varepsilon$-cell" ${\mathbb G}_\varepsilon$ by a simple scaling ${\mathbb G}_\varepsilon\ni x\mapsto y=x/\varepsilon\in{\mathbb G}.$
 More precisely, at each vertex $V$ of ${\mathbb G}$ there exists a list of unimodular ``weights" $\{w_V(e)\}_{e\ni V},$ cf. \cite{Physics}, defined  as a finite collection of values corresponding to the edges adjacent to $V$.
For each $\tau\in[-\pi,\pi)$, the fibre operator $A^\varepsilon_\tau$  is generated by the differential expression $a^\varepsilon\varepsilon^{-2}\partial_\tau^2$
%%%\begin{equation}
%%\left(
%%\frac 1{\rm i} \frac d{dy}+t\right)
%%%\frac{a^\varepsilon}{\varepsilon^2}\left(\frac d{dy}+{\rm i}\tau\right)^2	
%%%\label{diff_expr}
%%%\end{equation}
on the domain
\begin{equation*}
	\begin{aligned}
		{\rm dom}(A^\varepsilon_\tau)=
		\Big\{
		%\bigoplus\limits_{m\in\mathbb{Z}}
		v\in\bigoplus\limits_{e\in {\mathbb G}}W^{2,2}\bigl(e):&\ 
		%%\\[0.4em] 
		w_V(e)v|_e(V)=w_V(e')v|_{e'}(V)
		%\text{ and } \sum_{e_j\ni V}{\partial_\tau}_jv
		%\partial_n u
		\text{\ for all } e,e' \text{ adjacent to } V,\\[0.0em] \ &\sum_{e\ni V}\widetilde{\partial}_\tau v(V)=0\ \ \
		%\ {\rm is\ subject\ to\ conditions\ at}\ V^m_i,\,
		%j=1,\dots, n\ \
		{\rm for\ each\ vertex}\ V\Big\},
		%\quad\quad{\mathfrak e}^m_j:=\varepsilon\widetilde{\mathfrak e}^m_j.
	\end{aligned}
	%\label{Atau1}
\end{equation*}
%%In the formula (\ref{Atau1}) the summation is carried out over the edges $e$
%of ${\mathbb G}_\infty$
%%sharing the vertex $V,$
%%and
where $\widetilde{\partial}_\tau v(V)$ stands for the ``weighted co-derivative'' $\sigma_{e}w_V(e)a^\varepsilon\varepsilon^{-2}\partial_\tau v$   
%%%\[
%%%\sigma_{e}w_V(e)\frac{a^\varepsilon}{\varepsilon^2}\partial_\tau
%%\biggl(\frac{d}{dy}+{\rm i}\tau\biggr)v.
%%%\]
of the function $v$ on the edge $e,$ calculated at the vertex $V.$
%%%where $a^\varepsilon$ is calculated on the edge $e.$
% on the corresponding edge ${\mathfrak e}^m_j$ of the graph, calculated at the vertex $V_j,$ {\it i.e.} the expression $(a^\varepsilon)^m_j(u'+{\rm i}\tau),$ $\tau\in(-\pi,\pi],$ where $(a^\varepsilon)_j^m$ is the value of the coefficient of the expression (\ref{diff_operator}) on the edge ${\mathfrak e}^m_j.$

%%%Partial derivatives $\partial^{(t)}$ is defined with formula $\partial^{(t)}=\frac{1}{i} \frac{d}{dx}+t$. Then
%%%$$\partial^{(t)} v(x)=\frac{\varepsilon}{2\pi}\sum\limits_{n\in Z}a(x)u'(x+\varepsilon n) {\rm e}^{-it(x+\varepsilon n)}.$$

%\subsection{Analysis}

%\subsection{A periodic topology with high contrast}
\subsection{An example of operator on a graph and it norm-resovent approximation}

%%\section{Examples}
\label{section:examples}

The periodic graph considered, its periodicity cell and the result of Gelfand transform is shown in Fig.\,\ref{fig:1}. Denote by $a_j,$ $j=1,2,3,$ the values of $a^\varepsilon$ on the edges $e_j,$ $j=1,2,3,$ and assume for simplicity that $a_j=1.$
\begin{figure}[h!]
	\begin{center}
		\includegraphics[scale=0.7]{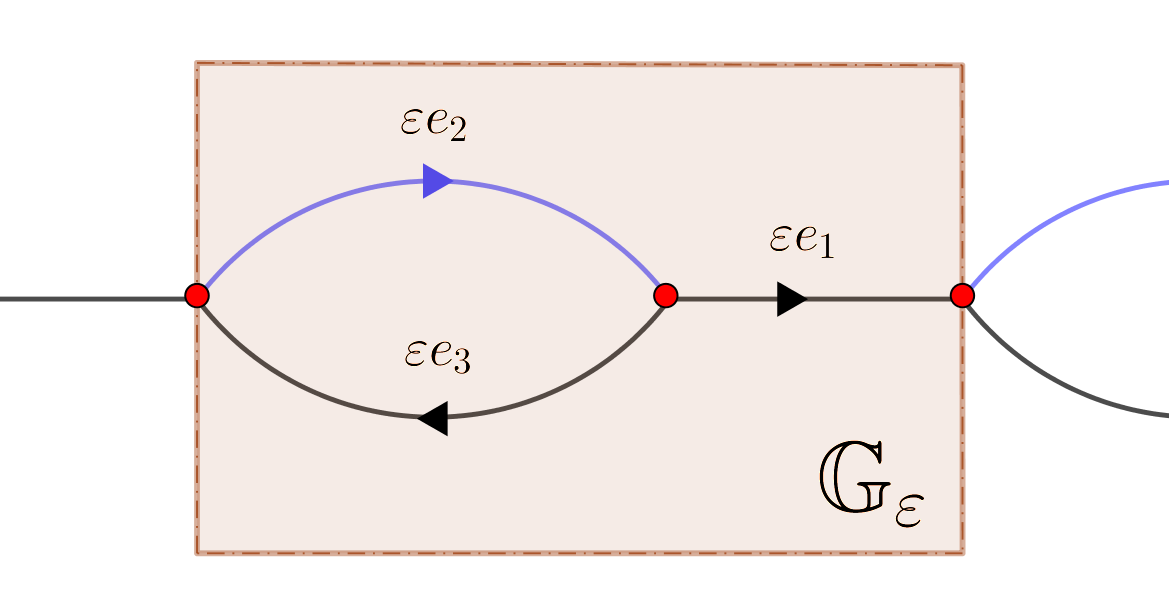}
		\includegraphics[scale=0.7]{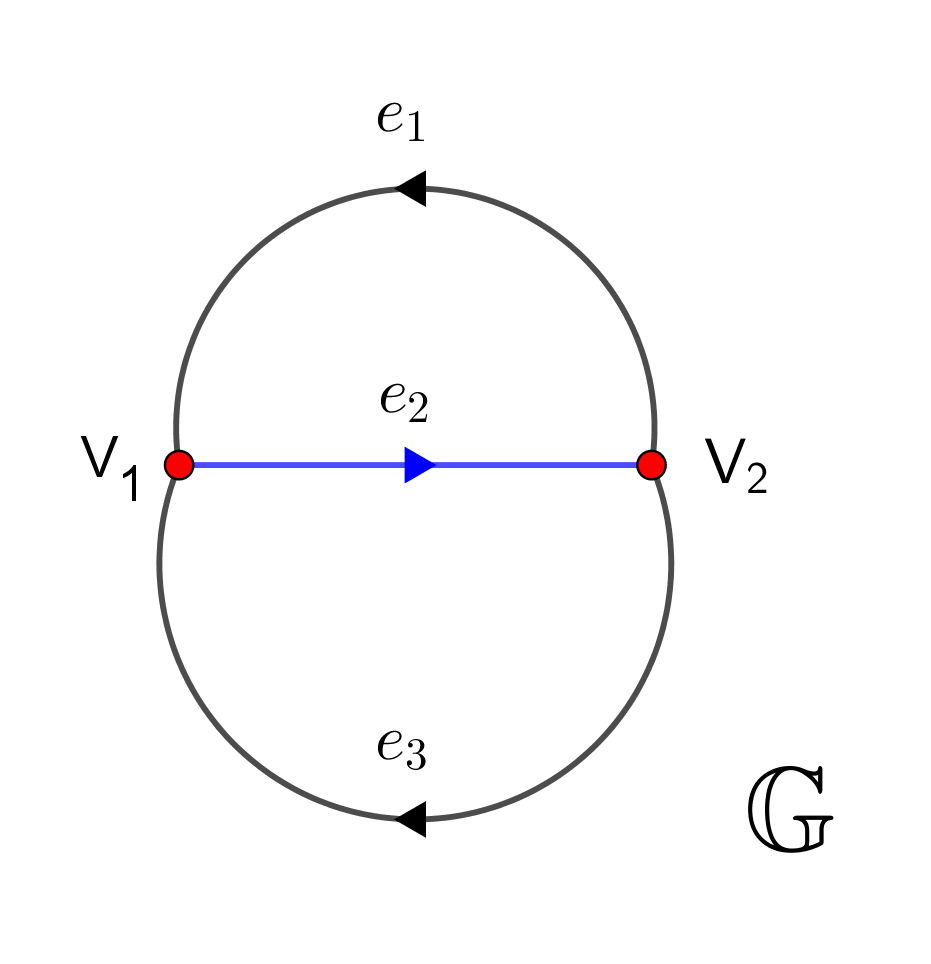}
	\end{center}
	\caption{{\scshape Example of a periodic graph with contrast.} {\small The infinite  graph $\mathbb G_\infty$ and the ``period" $\mathbb G_\e$ are outlined on the left; the graph unit cell $\mathbb G$ obtained after applying the Gelfand transform is shown on the right. The soft component is drawn in blue.}\label{fig:1}}
\end{figure}
%%The boundary space $\mathcal H$ pertaining to the graph $\mathbb G$ is chosen as $\mathcal H=\mathbb C^2$. 
The unimodular values $w_{V_k}(e_j),$ $j=1,2,3,$ $k=1,2,$ are then chosen as follows:
\begin{equation*}
	%%\label{eq:1-weights}
	\begin{gathered}
		\{w_{V_1}(e_j)\}_{j=1}^3=\bigl\{1,1,{\rm e}^{{\rm i}\tau(l_2+l_3)}\bigr\},\quad \{w_{V_2}(e_j)\}_{j=1}^3=\bigl\{{\rm e}^{{\rm i}\tau l_3},1,1\bigr\}
	\end{gathered}
\end{equation*}

For all $\tau\in[-\pi, \pi),$ consider an operator $\mathcal A_{\rm hom}^\tau$ on $L^2(0,l_2)\oplus\mathbb C,$
%%the space $H_\hom,$ 
defined as follows. Denote 
\[
\xi_\tau=-\frac{a_1}{l_1}{\rm e}^{{\rm i}\tau(l_1+l_3)}-\frac{a_3}{l_3}{\rm e}^{-{\rm i}\tau l_2}.
%%,\qquad
%$  and the operator $\dtau$ is defined as follows: $
%%\dtau u:=\biggl(\frac d {dx}+i\tau\biggr) u.
\]
The domain $\dom(\mathcal A_{\hom}^\tau)$ is set to be
\begin{equation*}%\label{eq:ex0_domain}
	\dom\bigl(\mathcal A_{\hom}^\tau\bigr)=\biggl\{(u,\beta)^\top\in L^2(0,l_2)\oplus\mathbb C:\  u\in W^{2,2}(0,l_2),\  u(0)=-\frac {\overline{\xi}_\tau}{|\xi_\tau|}u(l_2)=\frac{\beta}{\sqrt{l_1+l_3}}\biggr\}.
\end{equation*}
On $\dom(\mathcal A_{\hom}^\tau)$ the action of the operator is set by
$$
\mathcal A_{\hom}^\tau\binom{u}{\beta}=
\left(\begin{array}{c}-\partial_\tau^2u
	%%-\biggl(\dfrac{d}{dx}+{\rm i}\tau\biggr)^2
	\\[0.8em]
	-\dfrac{1}{\sqrt{l_1+l_3}}
	\biggl({\partial_\tau} u(0) + \dfrac {\overline{\xi}_\tau}{|\xi_\tau|}\partial_\tau u(l_2)\biggr)+\bigl(l_1+l_3\bigr)^{-1}\biggl(\dfrac {l_1}{a_1}+\dfrac {l_3}{a_3}\biggr)^{-1}\biggl(\dfrac{\tau}{\e}\biggr)^2 \beta
\end{array}\right).
$$

\begin{theorem}[\cite{GrandePreuve}]
	\label{thm:ex1}
	Denote $H:=\oplus_{j=1}^3 L^2(0, l_j).$ There exists $C>0,$ independent of $\varepsilon$ and $\tau,$ such that 
	%%$(A_\e^\tau-z)^{-1}$ admits the following estimate in the uniform operator norm topology:
	$$
	\bigl\Vert(A_\e^\tau-z)^{-1}-\Psi^* (\mathcal A_{\hom}^\tau-z)^{-1}\Psi\bigr\Vert_{H\to H}\le C\varepsilon^2,
	$$
	where $\Psi$ is a partial isometry  from $H$ to $L^2(0,l_2)\oplus\mathbb C.$
	%This estimate is uniform in $\tau\in[-\pi,\pi).$ 
	%%and $z\in K_\sigma$.
\end{theorem}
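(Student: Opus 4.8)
\emph{Proof strategy.} The estimate is established in detail in \cite{GrandePreuve}; here is the route I would take. The first step is to record a Krein-type resolvent formula for the fibre operator $A_\e^\tau$ with respect to a boundary triple adapted to the two vertices $V_1,V_2$ of the unit cell $\mathbb G$. Let $A_{\e,D}^\tau$ be the operator generated by $a^\e\e^{-2}\partial_\tau^2$ with Dirichlet conditions imposed at the endpoints of all three edges, let $\gamma_\e^\tau(z)$ be the associated solution operator, and let $M_\e^\tau(z)$ be the $M$-function at $V_1,V_2$ (so that the unimodular weights $w_{V_k}(e_j)$ encoding the weighted continuity, together with the Kirchhoff conditions, are already incorporated). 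Then
\[
(A_\e^\tau-z)^{-1}=(A_{\e,D}^\tau-z)^{-1}-\gamma_\e^\tau(z)\bigl(M_\e^\tau(z)\bigr)^{-1}\gamma_\e^\tau(\bar z)^*,
\]
and the analogous identity holds for $(\mathcal A_{\hom}^\tau-z)^{-1}$ relative to the effective boundary triple on $L^2(0,l_2)\oplus\complex$, whose $M$-function I call $M_{\hom}^\tau(z)$ — note that $M_{\hom}^\tau(z)$ itself still depends on $\e$ through the term involving $(\tau/\e)^2$. The plan is to expand each ingredient of the first identity in powers of $\e$ and to match it, modulo $O(\e^2)$, with the corresponding ingredient of the second, conjugated by the partial isometry $\Psi$.

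The mechanism is the separation of scales between the \emph{soft} edge $e_2$, on which the coefficient of the fibre operator is of order $1$, and the \emph{stiff} edges $e_1,e_3$, on which it is of order $\e^{-2}$. On $e_1$ and $e_3$ the Dirichlet resolvent is $O(\e^2)$ in operator norm, uniformly in $\tau$ — indeed the Dirichlet spectrum of $-(\partial+\I\tau)^2$ on an interval does not depend on $\tau$ — while on $e_2$ one has $(A_{\e,D}^\tau-z)^{-1}=(\partial_\tau^2-z)^{-1}$ exactly, which is precisely the Dirichlet part of $(\mathcal A_{\hom}^\tau-z)^{-1}$ on $L^2(0,l_2)$. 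The decisive structural fact is that, when the edge-wise $M$-function is expanded in $\e$, the blocks coming from $e_1$ and $e_3$ are of the form $\e^{-2}$ times a rank-deficient matrix plus lower-order terms; the kernels of these leading matrices carry the low-energy modes that are essentially constant along the stiff edges. Performing the reduction to the physical boundary space at $V_1,V_2$ and inverting, one finds that $M_\e^\tau(z)$ equals $M_{\hom}^\tau(z)$, after the natural identification of boundary data, up to an error of order $\e^2$: the one-dimensional surviving stiff channel carries the scalar $\beta$, its $L^2$-normalisation over $e_1\cup e_3$ fixing the constant $\sqrt{l_1+l_3}$ in the relations $u(0)=-\frac{\overline{\xi}_\tau}{|\xi_\tau|}u(l_2)=\beta/\sqrt{l_1+l_3}$; the coefficient $-\overline{\xi}_\tau/|\xi_\tau|$ records the phase accumulated between $V_1$ and $V_2$ along $e_1\cup e_3$; and the ``spring'' term $(l_1+l_3)^{-1}(l_1/a_1+l_3/a_3)^{-1}(\tau/\e)^2\beta$ is the effect that survives, at order $\e^{-2}$, of the rigid coefficient on the $\beta$-channel — it is of that order precisely because of the Bloch phase $\tau$, and the weighted averages are dictated by the lengths of $e_1,e_3$, their coefficients, and the normalisation. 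All of this simultaneously identifies $\Psi\colon H\to L^2(0,l_2)\oplus\complex$ as the map acting as the identity on the $e_2$-component and sending the $e_1$- and $e_3$-components to $\beta$ by projecting onto constants and normalising, annihilating the orthogonal complement of the constants. Collecting the remainders from $(A_{\e,D}^\tau-z)^{-1}$, $\gamma_\e^\tau(z)$ and $\bigl(M_\e^\tau(z)\bigr)^{-1}$ then yields the bound $C\e^2$.

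The step I expect to be the main obstacle is making the uniformity in $\tau$ precise, so that $C$ is genuinely independent of $\tau$. The expansion of the edge-wise $M$-function and, above all, the reduction/inversion step must be controlled uniformly down to $\tau=0$ — the band-edge (resonant) regime, in which the effective spring constant $(\tau/\e)^2$ degenerates — so that $\mathcal A_{\hom}^\tau$ remains a well-defined closed operator there and the remainder estimates do not deteriorate; one also needs $\inf_{\tau}|\xi_\tau|>0$ (a genericity condition on the lengths and coefficients, satisfied e.g. when $a_1/l_1\neq a_3/l_3$) so that $\overline{\xi}_\tau/|\xi_\tau|$ and the effective boundary conditions behave well for every $\tau$, and one must keep $z$ in a fixed subset of the common resolvent set on which $\bigl(M_\e^\tau(z)\bigr)^{-1}$ and $\bigl(M_{\hom}^\tau(z)\bigr)^{-1}$ are bounded uniformly in $\e$ and $\tau$. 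Once these uniformities are secured, the remaining work — the explicit expansion of the $M$-function and the verification that the boundary conditions of $\mathcal A_{\hom}^\tau$ are reproduced — is routine though lengthy.
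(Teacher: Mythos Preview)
The paper does not actually prove this theorem: it is stated with the citation \cite{GrandePreuve} and no argument is given here. Your proposal correctly identifies this and offers a sketch of the boundary-triple/Kre\u\i n-formula route that \cite{GrandePreuve} follows, so there is nothing in the present paper to compare against; your outline is consistent with the method of that reference.
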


%%{\color{blue} Insert a remark about $A^\tau_{\rm hom}$ being of the form (\ref{Aop}).

Clearly the operator $A_{\hom}^\tau$ is of the form (\ref{Aop})--(\ref{domain}) with 
\begin{equation}
l=l_2,\quad\eta=\sqrt{l_1+l_3},\quad \gamma=(l_1a_1^{-1}+l_3a_3^{-1})^{-1}(\tau/\varepsilon)^2,\quad\omega=-\overline{\xi}_\tau/|\xi_\tau|.
\label{parameters}
\end{equation}

\subsection{Abstract boundary triples}
\label{BT_theory}
%%for extensions of symmetric operators}
%%%\label{triples_section}

%%%In the analysis of the asymptotic behaviour of the fibres $A_t^\varepsilon$ of the original operator $A^\varepsilon$ representing the quantum graph, we employ the framework of boundary triples for a symmetric operator with equal deficiency indices for the description of a class of its extensions. Part of the toolbox of the theory of boundary triples is the generalisation of the classical Weyl-Titchmarsh $m$-function to the case of a matrix (finite deficiency indices) and operators (infinite deficiency indices).

%%%The boundary triples theory is a very convenient toolbox for dealing with extensions of linear operators, originating in the works of M.\,G. Kre\u\i n. In essence, it is an operator-theoretic interpretation of the second Green's identity, see (\ref{Green_identity}) below. As such, it allows one to pass over from the consideration of functions in Hilbert spaces to a formulation in which one deals with objects in the boundary spaces (such as traces of functions and their normal derivatives), which in the context of quantum graphs are finite-dimensional. Furthermore, it allows one to use explicit concise formulae for the resolvents of operators under scrutiny and other related objects. Thus it facilitates the analysis by expressing the familiar, commonly used in this area, objects in a concise way.

% introduce the related key definition.

Our approach is
%essentially
based
on the theory of boundary triples \cite{Gor,Ko1,Koch,DM}, applied
to the class of operators introduced above. We next recall two fundamental
concepts of this theory, namely the boundary triple and the generalised Weyl-Titchmarsh
%generalised
matrix function.

\begin{definition}
	\label{triple_def}
	%%[\cite{Gor,Ko1,DM}]
	Suppose that 
	%%	\begin{itemize}
	%%		\item 
	${A}_{\rm max}$ is the adjoint to a densely defined symmetric operator ${A}_{\rm min}$ on a separable Hilbert space ${H}$ (``physical region space") and  
	%%		\item
	that ${\Gamma}_0,$ ${\Gamma}_1$ are linear mappings of ${\rm dom}({A}_{\max})\subset{H}$
	to a separable Hilbert space ${\mathcal{H}}$ (``boundary space").
	%%	\end{itemize}
	
	A. The triple
	$({\mathcal{H}}, {\Gamma}_0,{\Gamma}_1)$ is called a boundary
		triple for the operator ${A}_{\max}$  %(or for the operator $A_{\rm min}=A_{\rm max}^*$)
	if:
	%% the following two conditions hold:
	\begin{enumerate}
		\item For all $u,v\in {\rm dom}({A}_{\max})$ one has the second Green's identity
		\begin{equation*}
			\langle {A}_{\max} u,v \rangle_{{H}} -\langle u, {A}_{\max} v \rangle_{{H}} = \langle{\Gamma}_1 u, {\Gamma}_0
			v\rangle_{{\mathcal{H}}}-\langle{\Gamma}_0 u, {\Gamma}_1 v\rangle_{{\mathcal{H}}}.
			%%%\label{Green_identity}
		\end{equation*}
		\item The mapping
		${\rm dom}({A}_{\max})\ni u\longmapsto ({\Gamma}_0 u,
		{\Gamma}_1 u)\in{{\mathcal H}}\oplus{{\mathcal H}}$
		%$u\in {\rm dom}(A_{\max})$
		is onto.%surjective, {\it i.e.}, for all
		%$Y_0,Y_1\in\mathcal{H}$ there exists an element $y\in
		%{\rm dom}(A_{\max})$ such that $\Gamma_0 y=Y_0,\ \Gamma_1 y =Y_1.$
	\end{enumerate}
	
	%%%%B. A restriction ${A}_B$ of the operator $A_{\rm max}$ such
	%%%%that $A_{\rm max}^*=:A_{\min}\subset  A_B\subset A_{\max}$  is called
	%%%%\emph{almost solvable} if there exists a boundary triple
	%%%%$(\mathcal{H}, \Gamma_0,\Gamma_1)$ for $A_{\max}$ and a bounded
	%%%%linear operator $B$ defined on $\mathcal{H}$ such that
	%for every
	%$u\in {\rm dom}(A_{\max})$ one has
	%$$
	%u\in{\rm dom}({A_B})\ \ \ \text{\ if and only if }\ \ \ \Gamma_1 u=B\Gamma_0 u.
	%$$

	B. The operator-valued Herglotz\footnote{For a definition and properties of Herglotz functions, see, e.g., \cite{MR0328627KK, Nussenzveig, GT, GKMT, ABT}.} function ${\mathfrak m}={\mathfrak m}(z),$ defined by
	\begin{equation}
		\label{Eq_Func_Weyl}
		{\mathfrak m}(z){\Gamma}_0 u_{z}={\Gamma}_1 u_{z}, \ \
		u_{z}\in \ker ({A}_{\max}-z),\  \ z\in
		\mathbb{C}_+\cup{\mathbb C}_-,
	\end{equation}
	is referred to as the $M$-function of the operator
	${A}_{\max}$ with respect to the triple $({\mathcal{H}}, {\Gamma}_0,{\Gamma}_1)$.

	C. A non-trivial extension ${A}_B$ of the operator ${A}_{\min}$ such
	that ${A}_{\min}\subset {A}_B\subset{A}_{\max}$  is called almost solvable if there exists a boundary triple
	$({\mathcal{H}}, {\Gamma}_0, {\Gamma}_1)$ for ${A}_{\max}$ and a bounded
	linear operator $B$ defined on ${\mathcal{H}}$ such that for every
	$u\in {\rm dom}({A}_{\max})$ one has $u\in {\rm dom}({{A}_B})$ if and only if ${\Gamma}_1 u=B{\Gamma}_0 u.$
	
	%%%The (correctly defined) operator-valued function $M=M(z)$ given by
	%%%\begin{equation*}\label{Eq_Func_Weyl}
	%%%	M(z)\Gamma_0 u_{z}=\Gamma_1 u_{z}, \ \
	%%%	u_{z}\in \ker (A_{\max}-z),\  \ z\in
	%%%	\mathbb{C}_+\cup{\mathbb C}_-,
	%%%\end{equation*}
	%%%is called the Weyl-Titchmarsh function, or $M$-function function, of the operator
	%%%$A_{\max}$ with respect to the corresponding boundary triple.
\end{definition}

In what follows, we use the boundary triple approach to the extension theory of
symmetric operators with equal deficiency indices (see
\cite{Derkach} for a review of the subject), which is particularly useful
in the study of extensions of ordinary differential operators of second order.

\subsection{The boundary triple for the prototype dilation operator}
\label{proto_triple}

%%{\color{red} Theorems: triple, M-function.}

Here we aim at constructing a convenient boundary triple for the operator (\ref{Aop})--(\ref{domain}) in the space $H:=L^2(0,l)\oplus{\mathbb C}.$ To this end, consider the following domains for the minimal and maximal (i.e., the adjoint to the minimal) operators corresponding to the same expression (\ref{Aop}):
\begin{align}
{\rm dom}\left(A_{\rm min}\right)&=\left\{\binom{u}{\beta}\in W^{2,2}(0,l)\oplus{\mathbb C}: u(0)=\omega u(l)=\eta^{-1}\beta,\  Du
%%{\partial_\tau}u(0)-\omega{\partial_\tau}u(l)
=0\right\},\nonumber  
\\[0.5em] 
{\rm dom}\left(A_{\rm max}\right)&=\left\{\binom{u}{\beta}\in W^{2,2}(0,l)\oplus{\mathbb C}: u(0)=\omega u(l)\right\},\label{Amax_dom}
\end{align}
where $Du:={\partial_\tau}u(0)-\omega{\partial_\tau}u(l).$
%%$Du$ is defined by (\ref{Du_form}).

\begin{theorem} The triple $({\mathcal H}, \Gamma_0, \Gamma_1),$ where
\begin{equation}
\mathcal{H}:=\mathbb{C},\qquad\Gamma_0 \binom{u}{\beta} := Du,
%%(={\partial_\tau}u(0)-\omega{\partial_\tau}u(l)),
\qquad 
\Gamma_1 \binom{u}{\beta} := \frac{\beta}{\eta}-u(0),\quad\qquad \binom{u}{\beta}\in W^{2,2}(0,l)\oplus{\mathbb C},
\label{triple_expl}
\end{equation}
is a boundary triple for the operator $A_{\rm max}$ defined by the expression (\ref{Aop}) on the domain (\ref{Amax_dom}). 
\end{theorem}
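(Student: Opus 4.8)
The plan is to verify the two defining properties of a boundary triple from Definition~\ref{triple_def}.A directly for the candidate maps $\Gamma_0,\Gamma_1$ in \eqref{triple_expl}: the second Green's identity, and surjectivity of $u\mapsto(\Gamma_0 u,\Gamma_1 u)$ onto $\mathcal H\oplus\mathcal H=\mathbb C^2$. Since both boundary maps take values in $\mathbb C$, all the inner products on the boundary space reduce to products of complex numbers, which should keep the bookkeeping light.

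First I would establish Green's identity. For $(u,\beta)^\top,(v,\delta)^\top\in W^2_2(0,l)\oplus\mathbb C$ with $u(0)=\omega u(l)$ and $v(0)=\omega v(l)$, compute
\[
\bigl\langle A_{\max}(u,\beta)^\top,(v,\delta)^\top\bigr\rangle_H-\bigl\langle (u,\beta)^\top,A_{\max}(v,\delta)^\top\bigr\rangle_H.
\]
The $L^2(0,l)$ part contributes $\int_0^l(-\partial_\tau^2 u\,\overline v+u\,\overline{\partial_\tau^2 v})$, which after two integrations by parts (using that $\partial_\tau=\partial+\mathrm i\tau$ and that the $\mathrm i\tau$ terms cancel in the skew-symmetric combination) equals the boundary term $-[\partial_\tau u\,\overline v-u\,\overline{\partial_\tau v}]_0^l$; evaluating at the endpoints and using $u(0)=\omega u(l)$, $v(0)=\omega v(l)$ together with $|\omega|=1$, this collapses to $-Du\,\overline{v(0)}+u(0)\,\overline{Dv}$ up to a careful sign check. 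The $\mathbb C$-component of $A_{\max}$ contributes $(-\eta^{-1}Du+\gamma\eta^{-2}\beta)\overline\delta-\beta\,\overline{(-\eta^{-1}Dv+\gamma\eta^{-2}\delta)}$; the $\gamma\eta^{-2}$ terms cancel (real coefficient, symmetric), leaving $-\eta^{-1}Du\,\overline\delta+\eta^{-1}\beta\,\overline{Dv}$. Adding the two contributions and regrouping, the total should reorganise exactly into $Du\,\overline{(\eta^{-1}\delta-v(0))}-\overline{Dv}\,(\eta^{-1}\beta-u(0))=\Gamma_0 u\,\overline{\Gamma_1 v}-\overline{\Gamma_1 v}\cdots$, i.e. into $\langle\Gamma_1 u,\Gamma_0 v\rangle_{\mathbb C}-\langle\Gamma_0 u,\Gamma_1 v\rangle_{\mathbb C}$. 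The main thing to watch is the precise matching of signs and the role of $\omega$ at $x=l$ versus $x=0$; I expect the identity $|\omega|=1$ (equivalently $\omega\overline\omega=1$) to be exactly what is needed to make the endpoint terms telescope.

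Next I would prove surjectivity. Given an arbitrary target $(c_0,c_1)\in\mathbb C^2$, I need $(u,\beta)^\top$ in the maximal domain with $Du=c_0$ and $\eta^{-1}\beta-u(0)=c_1$. The latter is trivially solved by choosing $\beta:=\eta(c_1+u(0))$ once $u$ is fixed, so $\beta$ imposes no constraint. It remains to exhibit $u\in W^2_2(0,l)$ with $u(0)=\omega u(l)$ and ${\partial_\tau}u(0)-\omega{\partial_\tau}u(l)=c_0$; this is two scalar conditions on the four-dimensional space of, say, affine-plus-bump profiles, and is easily met — for instance by taking $u$ smooth, supported near the endpoints, with prescribed boundary values and one-sided derivatives chosen to satisfy the two linear equations (the $2\times2$ system is nonsingular because the traces $u(0),u(l),u'(0),u'(l)$ can be prescribed independently for $W^2_2$ functions). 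Hence the pair $(\Gamma_0,\Gamma_1)$ is onto $\mathbb C^2$, and since $\mathcal H=\mathbb C$ is one-dimensional, this also forces the deficiency indices and the consistency of $A_{\min},A_{\max}$ as mutual adjoints with the stated domains; I would note in passing that $\dom(A_{\min})=\{u\in\dom(A_{\max}):\Gamma_0 u=\Gamma_1 u=0\}$ recovers precisely \eqref{domain} with the extra condition, confirming the triple is attached to the right operator.

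The main obstacle is purely the sign/coefficient accounting in Green's identity: one must be careful that the $\mathrm i\tau$ contributions from $\partial_\tau^2=(\partial+\mathrm i\tau)^2$ genuinely cancel in the antisymmetric pairing (they do, since $\overline{\mathrm i\tau}=-\mathrm i\tau$ produces the matching cross terms), and that the endpoint substitution using $u(l)=\overline\omega u(0)$, $\partial_\tau u(l)$ etc. is done consistently on both $u$ and $v$ so that the $x=l$ boundary term converts cleanly into the same $Du$, $v(0)$ combination as the $x=0$ term rather than into a $u(l)$, $\partial_\tau v(l)$ combination. Once that reduction is carried out correctly, matching with the finite-dimensional contribution from the $\beta$-component is immediate because the only nontrivial coupling is through $Du$ and $u(0)$, which is exactly how $\Gamma_0$ and $\Gamma_1$ are built. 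Everything else is routine.
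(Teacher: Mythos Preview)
Your proposal is correct and follows essentially the same route as the paper: verify Green's identity by direct integration by parts (using $u(0)=\omega u(l)$, $v(0)=\omega v(l)$ and $|\omega|=1$ to collapse the endpoint terms into $Du$, $Dv$, $u(0)$, $v(0)$) and then check surjectivity, which the paper dismisses in one line while you spell out more fully. The sign you flag as needing a ``careful sign check'' does indeed come out as $Du\,\overline{v(0)}-u(0)\,\overline{Dv}$ (the opposite of what you tentatively wrote), and with that correction your regrouping yields exactly $\langle\Gamma_1 u,\Gamma_0 v\rangle_{\mathbb C}-\langle\Gamma_0 u,\Gamma_1 v\rangle_{\mathbb C}$.
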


\begin{proof}The second property of the triple in Definition \ref{triple_def} is verified immediately, and the following calculations show that the second Green's identity holds as well:
\begin{align*}
	&\left\langle A_{\rm max}\binom{u}{\beta},\binom{v}{\zeta}\right\rangle_H-\left\langle \binom{u}{\beta},A_{\rm max}\binom{v}{\zeta}\right\rangle_H=
	\\
	&=-\int\limits_0^l \partial_\tau^2 u \bar{v}dx-\left(\frac 1{\eta}Du+\frac{\gamma}{\eta^2}\beta\right)\bar\zeta+\int\limits_0^l u \overline{\partial_\tau^2 v} dx+\beta \overline{\left(\frac 1{\eta}Dv+\frac{\gamma}{\eta^2}\zeta\right)}
	\\
    &=-\int\limits_0^l u \overline{\partial_\tau^2 v} dx +\left[u \overline{{\partial_\tau}v}-{\partial_\tau}u \overline{v}\right]_{0}^{l}-
    \left(\frac{1}{\eta}Du
    +\frac{\gamma}{\eta^2}\beta\right)\overline{\zeta}
	+\int\limits_0^l u \overline{\partial_\tau^2 v} dx+\beta 
	\overline{\left(\frac{1}{\eta}Dv+\frac{\gamma}{\eta^2}\zeta\right)}
	\\
	&=\left(u(l)\overline{{\partial_\tau}v(l)}-u(0) \overline{{\partial_\tau}v(0)}\right)-\left({\partial_\tau}u(l) \overline{v(l)}-
	{\partial_\tau}u(0) \overline{v(0)}\right)+\frac {\beta}{\eta} \overline{Dv}-\frac {\bar{\zeta}}{\eta}Du
	\\[0.6em]
	&= \Gamma_1\binom{u}{\beta}\overline{\Gamma_0\binom{v}{\zeta}}-\Gamma_0 \binom{u}{\beta}\overline{\Gamma_1\binom{v}{\zeta}}.
\end{align*}
\end{proof}
Let us next calculate the corresponding $M$-function, which is defined by the property (cf. (\ref{Eq_Func_Weyl}))  
$$
{\mathfrak m}(z)\Gamma_0\binom{u_z}{\beta_z}=\Gamma_1\binom{u_z}{\beta_z},\qquad\quad \binom{u_z}{\beta_z}\in \ker(A_{\rm max}-zI).
$$

\begin{theorem}
The $M$-function of the operator
$A_{\max}$ with respect to the triple (\ref{triple_expl})
%%$(\mathcal{H}, \Gamma_0,\Gamma_1)$
is given by
\begin{equation}
	{\mathfrak m}(z)=-\frac{\sin\sqrt{z}l}{2\sqrt{z}\bigl(\Re({\rm e}^{{\rm i}\tau l}\bar{\omega})-\cos\sqrt{z}l\bigr)}-\frac{1}{\eta^2 z-\gamma}.
	\label{Mfunction_expl}
\end{equation}

\end{theorem}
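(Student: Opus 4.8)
The plan is to compute the $M$-function directly from its defining relation by solving the eigenvalue equation $(A_{\max}-zI)(u_z,\beta_z)^\top=0$ in closed form. From the second component of (\ref{Aop}), the equation $-\eta^{-1}Du_z+\gamma\eta^{-2}\beta_z=z\beta_z$ gives $\beta_z=Du_z/(\gamma-\eta^2 z)$, so $\beta_z$ is determined by $u_z$ once $z\neq\gamma/\eta^2$. From the first component, $u_z$ solves $-\partial_\tau^2 u_z=zu_z$ on $(0,l)$ subject only to the periodicity-type condition $u_z(0)=\omega u_z(l)$ inherited from $\operatorname{dom}(A_{\max})$ in (\ref{Amax_dom}). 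Writing $k=\sqrt{z}$ and recalling $\partial_\tau=\partial+\mathrm{i}\tau$, the general solution of the ODE can be taken as $u_z(x)=\mathrm{e}^{-\mathrm{i}\tau x}\bigl(c_1\mathrm{e}^{\mathrm{i}kx}+c_2\mathrm{e}^{-\mathrm{i}kx}\bigr)$, a two-dimensional space; the single constraint $u_z(0)=\omega u_z(l)$ cuts this down to a one-dimensional space, consistent with $\mathcal H=\mathbb C$.

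Next I would evaluate $\Gamma_0$ and $\Gamma_1$ on this solution. For $\Gamma_0(u_z,\beta_z)^\top=Du_z=\partial_\tau u_z(0)-\omega\,\partial_\tau u_z(l)$, the factor $\mathrm{e}^{-\mathrm{i}\tau x}$ is annihilated by $\partial_\tau$ acting on the exponential, so $\partial_\tau u_z$ reduces to $\mathrm{e}^{-\mathrm{i}\tau x}(\mathrm{i}k c_1\mathrm{e}^{\mathrm{i}kx}-\mathrm{i}k c_2\mathrm{e}^{-\mathrm{i}kx})$; substituting the constraint relating $c_1,c_2$ and simplifying the trigonometric combination yields $Du_z$ as a multiple of $u_z(0)$ times a factor of the form $2k\bigl(\Re(\mathrm{e}^{\mathrm{i}\tau l}\bar\omega)-\cos kl\bigr)/(\text{something})$ — this is where the denominator of the first term in (\ref{Mfunction_expl}) originates, after using $|\omega|=1$. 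For $\Gamma_1(u_z,\beta_z)^\top=\beta_z/\eta-u_z(0)=Du_z/\bigl(\eta(\gamma-\eta^2 z)\bigr)-u_z(0)$, I substitute the expression for $Du_z$ just obtained. The $M$-function is then the ratio $\Gamma_1/\Gamma_0$, and the term $-u_z(0)$ divided by $Du_z$ produces exactly $-\sin(kl)/\bigl(2k(\Re(\mathrm{e}^{\mathrm{i}\tau l}\bar\omega)-\cos kl)\bigr)$ while the term $Du_z/\bigl(\eta(\gamma-\eta^2 z)\bigr)$ divided by $Du_z$ gives $1/\bigl(\eta(\gamma-\eta^2 z)\bigr)$; rewriting $\gamma-\eta^2 z=-(\eta^2 z-\gamma)$ and absorbing the remaining factor of $\eta$ (which should disappear once the normalisation $\eta u_z(0)=\beta_z$-type relations are tracked carefully, or be reconciled with the exact statement) produces the second term $-1/(\eta^2 z-\gamma)$.

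The step I expect to be the main obstacle is the bookkeeping in the first term: correctly reducing the combination $\partial_\tau u_z(0)-\omega\,\partial_\tau u_z(l)$ divided by $u_z(0)$ to the stated closed form, since this requires (i) choosing the right parametrisation of $\ker(A_{\max}-z)$, (ii) using the constraint $u_z(0)=\omega u_z(l)$ together with $|\omega|=1$ to turn a complex expression involving $\mathrm{e}^{\mathrm{i}\tau l}$ and $\omega$ into the real quantity $\Re(\mathrm{e}^{\mathrm{i}\tau l}\bar\omega)$, and (iii) keeping track of the branch/normalisation of $k=\sqrt z$ so that the $\sin kl$ and $2k$ factors appear with the correct signs. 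The second term is comparatively routine, coming straight from the algebraic relation $\beta_z=Du_z/(\gamma-\eta^2 z)$. Once both pieces are assembled, the identity (\ref{Mfunction_expl}) follows; it is worth a short sanity check that $\mathfrak m$ is Herglotz in $z$ on $\mathbb C_+\cup\mathbb C_-$, consistent with part B of Definition \ref{triple_def}.
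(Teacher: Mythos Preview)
Your approach is essentially the same as the paper's: solve the kernel equation, express $\beta_z$ algebraically in terms of $Du_z$, normalise (the paper sets $u_z(0)=\omega u_z(l)=1$), and read off $\mathfrak m(z)=\Gamma_1/\Gamma_0=-1/(\eta^2 z-\gamma)-1/Du_z$, then compute $Du_z$ explicitly from the ODE solution and the matching condition. One small slip to fix: from $-\eta^{-1}Du_z+\gamma\eta^{-2}\beta_z=z\beta_z$ you get $\beta_z=\eta\,Du_z/(\gamma-\eta^2 z)$, not $Du_z/(\gamma-\eta^2 z)$; with the correct factor of $\eta$ the quantity $\beta_z/\eta$ equals $Du_z/(\gamma-\eta^2 z)$ directly and the ``remaining factor of $\eta$'' you flagged never appears.
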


\begin{proof}
The general solution of the spectral problem
$$
\left\{
\begin{array}{ll}
	-\partial_\tau^2 u_z=z u_z,\\[0.4em]
	-\eta^{-1}Du+\gamma\eta^{-2}\beta_z= z\beta_z,
\end{array}
\right.$$
is given by
$$
%%\left\{
%%\begin{array}{c}
	u_z = {\rm e}^{-{\rm i}\tau x}\bigl(C_1{\rm e}^{{\rm i}\sqrt{z}x}+C_2{\rm e}^{-{\rm i}\sqrt{z}x}\bigr),
	%%\quad k=\sqrt{z},
	\qquad
	\beta_z=\dfrac{\eta}{\eta^2z-\gamma}Du,
%%\end{array}
%%\right.
$$
where the branch of the square root is chosen so that $\sqrt{z}$ is real for real positive $z$. 
%%i.e. $\sqrt{z}$ is an $R$-function.

Normalising $u_z$ by the condition 
\begin{equation}
u_z(0)=\omega u_z(l)=1,
\label{normalisation}
\end{equation}
 we obtain  $\Gamma_0 u_z= Du_z$, $\Gamma_1 u_z=\eta^{-1}\beta_z-1,$
and hence 
\begin{equation}
{\mathfrak m}(z)=\frac{\Gamma_1 u_z}{\Gamma_0 u_z}=-\frac{1}{\eta^2 z-\gamma}-\frac{1}{Du_z}.
\label{M_first}
\end{equation}

It remains to determine the values $C_1,$ $C_2$ for $u_z$ satisfying (\ref{normalisation}) and hence evaluate $Du_z.$ To this end, we write 
$$
%%\left\{
%%\begin{array}{c}
	u_z(0)=C_1+C_2=1,\qquad\quad
	u_z(l)={\rm e}^{-{\rm i}\tau l}\bigl(C_1{\rm e}^{{\rm i}\sqrt{z}l}+C_2{\rm e}^{-{\rm i}\sqrt{z}l}\bigr)=\bar{\omega},
%%\end{array}
%%\right.
$$
whence 
$$
%%\left\{
%%\begin{array}{c}
	C_1=\dfrac{\bar{\omega}{\rm e}^{{\rm i}\tau l}-{\rm e}^{-{\rm i}\sqrt{z} l}}{2{\rm i}\sin\sqrt{z}l},\qquad\quad 
%%\,
	%\\
	C_2=\frac{{\rm e}^{{\rm i}\sqrt{z}l}-\bar{\omega}{\rm e}^{{\rm i}\tau l}}{2{\rm i}\sin(\sqrt{z}l)}.
%%\end{array}
%%\right.
$$
It follows that
$$
u_z(x)=\frac{{\rm e}^{-{\rm i}\tau x}}{\sin\sqrt{z}l} \left(\bar{\omega}e^{{\rm i}\tau l}\sin\sqrt{z}x +\sin\sqrt{z}(l-x)\right),\qquad x\in[0,l],
$$
and, in particular,
$$
Du_z=\frac{\sqrt{z}}{\sin\sqrt{z}l} \left(\bar{\omega}{\rm e}^{{\rm i}\tau l}-\cos\sqrt{z}l -\omega(\bar{\omega}\cos\sqrt{z}l -{\rm e}^{-{\rm i}\tau l}) \right)=
\frac{2\sqrt{z}\bigl(\Re({\rm e}^{{\rm i}\tau l}\bar{\omega})-\cos\sqrt{z}l\bigr)}{\sin\sqrt{z}l}.
$$
Combining this with (\ref{M_first}) finally yields (\ref{Mfunction_expl}).
\end{proof}

%%{\color{red} In what follows we use the notation ${\mathfrak m}$ i place of $M.$}

\section{Spectral form of the functional model for the \v{S}traus-Neumark dilation}
%{functional model for generalised resolvents}
\label{sec:functional-model}

	The first (and the only known to us) attempt at a construction of the functional model for a generalised resolvent is contained in \cite{PavlovFaddeev}, where a ``5-component" self-adjoint dilation was developed for a (actually, more challenging) problem with an impedance linear in $\sqrt{z}$ rather than $z,$ using methods resembling those employed in the dilation theory for dissipative operators. However, that work stops short of constructing any sort of spectral representation for the named dilation.      
	
	Setting out to construct a spectral form for the dilation in our case, we draw our inspiration in essentially the same pool of ideas but, instead of constructing a 5-component model like in \cite{PavlovFaddeev}, we achieve our goal in two steps. First, facilitated by the linear in $z$ form of the impedance, we construct an out-of-space self-adjoint extension of the associated symmetric operator (i.e., the one obtained by ``restricting" the generalised resolvent), so that the named extension is the Neumark-\v{S}traus dilation of our generalised resolvent. Second, considering a fixed dissipative extension of the same symmetric operator, we develop its self-adjoint dilation, thereby dilating the underlying space even further. Following this, we utilise an explicit formula describing the resolvent of the Neumark-\v{S}traus dilation constructed at the first step in this ``twice-dilated'' space. The overall success of the strategy is rooted in the fact that the self-adjoint dilation of the dissipative operator introduced at the second step admits an explicit spectral representation. It is in this spectral representation that the action of the self-adjoint Neumark-\v{S}traus dilation takes the simplest form, which can be shown to be a triangular perturbation of a Toeplitz operator \cite{KisNab, CherKisSilva2}. The latter is then used to pass over to a yet another representation, where the original Hilbert space is unitarily equivalent to a space of the class $K_\theta,$ which has been studied in, e.g., \cite{Clark, Aleksandrov_pseudocontinuable, Poltora, Poltora_Sarason, Nikolski}.
	
	Finally, we make use of the fact that the space $K_\theta,$ in its turn, is unitarily equivalent to the $L^2$-space with respect to a Clark measure. We note that alternative constructions to \cite{PavlovFaddeev} have appeared in the literature
	%% mentioned above,
	 \cite{Shkalikov_1983, OQS_Malamud, Trace_formulae_Malamud, Figotin_Schenker_2005, Figotin_Schenker_2007b}, which, however, touches neither upon the spectral form of the Neumark-\v{S}traus dilation nor upon the functional model for the associated generalised resolvent.

	%%%Explain that there are two dilations involved here (via a restriction). Start with explaining the role of Naimark-Strauss dilation. Refer to Pavlov-Adamyan (5-component dilation). General construction for this is not known, but in our case the spectral form will appear, but we have to construct another dilation.}

%%%{\color{blue} The operator of interest is now formulated in terms of the boundary triples framework.}

%%%{\color{blue} In fact, we are interested in just two operators, the
%%%	one corresponding to $\eta={\rm i}I$ and $\eta=0.$}

%%%{\color{blue} We are dealing with a scalar situation, so rewrite the
%%%	formulae throughout below taking this into account. In particular,
%%%	replace $I$ by $1.$ Use $\frak{m, s}$ instead of $M, S.$}

The first step of the above programme has been carried out in Section \ref{proto_triple}, where the corresponding extension of the minimal symmetric operator has been constructed, the corresponding boundary triple framework has been developed, and the corresponding $M$-function has been computed.

 In order to pursue the second step, we now need to pick a convenient dissipative operator belonging to the class considered, which is the class of all extensions $A_\varkappa,$ 
%%For each 
$\varkappa\in{\mathbb C},$ of $A_{\rm min}$
%%at the first step of the programme outlined above, 
%%}
%%We consider extensions $A_\varkappa$ of $A_{\min}$ 
%%that is a restriction of $A_{\max}$ and 
whose domains are given on the basis of
the boundary triple
$({\mathbb C},\Gamma_1,\Gamma_0)$ for $A_{\rm max}$ as
follows:
\begin{equation}
	\label{eq:extension-by-operator}
	\dom(A_\varkappa):=\bigl\{f\in\dom(A_{\max}):\Gamma_1f=\varkappa\Gamma_0f\bigr\}.
\end{equation} 
%%where $B\in\mathcal{B}(\mathcal{H})$. 
%%%Such extensions are referred to as \emph{almost solvable}.
 It follows from \cite[Thm.\,2]{Ko1} and
\cite[Chap.\,3 Sec.\,1.4]{Gor} (see also an alternative
formulation in \cite[Thm.\,1.1]{MR2330831}, and
\cite[Sec.~14]{MR2953553}) that $A_\varkappa$ is maximal,  i.\,e., $\rho(A_\varkappa)\ne\emptyset$. For the construction of the Pavlov model, we need to consider one selected dissipative operator, given by (\ref{eq:extension-by-operator}) with $\varkappa={\rm i}.$

	It was shown by Ryzhov \cite{MR2330831} that the characteristic function ${\mathfrak s}$ of \v{S}traus for the operator $A_{\I}$ is
	%%the operator-valued function ${\mathfrak s}$ on $\complex_+$ 
	given by	
	%%%in the case of almost solvable extensions (see Definition \ref{triple_def}) corresponding to parameter $\varkappa$	
	%%boundary operators of the form 
%%%\[
%%%B_{\widehat{\varkappa}}=\frac{{\alpha}\widehat{\varkappa}{\alpha}}{2},
%%%\]
%%%where ${\alpha}$ is a fixed bounded non-negative operator on 
%%%$\widehat{\mathcal{H}}$ and $\widehat{\varkappa}$ is an arbitrary bounded operator on 
%%%${\rm clos}({\rm ran}({\alpha}))=:E,$
%%%% i.e. extensions $\widehat{A}_B$ of a symmetric operator $\widehat{A}_{min}$ whose domains are described by 
%%%({\it cf.} (\ref{eq:extension-by-operator}))
%%%%	\[
%%%%\dom(\widehat{A}_{\rm min}):=\bigl\{f\in\dom(\widehat{A}_{\rm min}^*): %%%%\widehat{\Gamma}_1f=\varkappa\widehat{\Gamma}_0f\bigr\},   
%%%%\]
%%%can be written as follows:
%such that
\begin{equation}
	{\mathfrak s}(z)=1-\frac{2{\rm i}}{{\rm i}+{\mathfrak m}(z)},\qquad z\in\complex_+.
	\label{SviaM}
\end{equation}	
%%%\begin{equation}
%%%	{\mathfrak}(z):=1+2{\rm i}\bigl(B_{\I I}^*-M(z)\bigr)^{-1}{\alpha}\eval E,\qquad  z\in\complex_+.
%%%	\label{S_definition}
%%%\end{equation}
%%%Here, $M=M(z)$ is the $M$-function for $\widehat{A}_{\max}$ with respect to the triple $(\widehat{\mathcal{H}}, \widehat{\Gamma}_0,\widehat{\Gamma}_1).$
%(Explain what is what!) 
%%%In our situation ${\alpha}$ is scalar, namely ${\alpha}=\sqrt{2},$ so that $\widehat{\varkappa}=\varkappa$ and $B_{\widehat{\varkappa}}=\varkappa,$ see (\ref{eq:extension-by-operator}), in particular $B_{{\rm i}I}={\rm i}.$ It follows that the formula (\ref{S_definition}) yields
Thus, the characteristic function is the Cayley transform of the $M$-function ${\mathfrak m},$ 
%%%given by (\ref{Mfunction_expl}), 
cf. \cite{Pavlov_cont_spec_res}.
%% Pavlov, On the continuous spectrum of resonances on the non-physical sheet.
	%%\begin{definition}
	%%	\label{def:charasteristic-function}		
%%	\end{definition}
%%%{\color{red}In the general setting, the characteristic function is defined as in
%%%\cite[Def.\,1.7]{MR2330831}. Our definition is justified by
%%%\cite[Eq.\,1.16]{MR2330831}.}
%%\begin{remark}
%%	\label{rem:def-charasteristic-function}
	%%It can be shown, b
	Based on the material presented in Section 2.5 or by a standard argument, one verifies that $\mathfrak{s}$ is analytic in $\complex_+$ and, for each
	$z\in\complex_+$, $\abs{\mathfrak{s}}\le 1$. Therefore, by invoking the classical Fatou theorem, see {\it e.g.} \cite{MR2760647},  the function $\mathfrak{s}$ has a nontangential limit almost everywhere on the real line, 
	%%\cite{MR2760647},
	%  [Sec.\,1]{MR573902},
	which we will henceforth denote by $\mathfrak{s}(k),$ $k\in{\mathbb R}.$ However, in our case its analytic properties in the vicinity of the real line are in fact much better, which we discuss and take advantage of below. 
%%\end{remark}

%%%{\color{red} Although the result is obvious from (12)-- (14), it's also true on the abstract level -- comment!}

%%%\begin{remark}	
	%%{\color{blue} In our case, $\alpha=\sqrt{2}.$ Rewrite the 5 formulae below.}
%%%	When $\alpha=\sqrt{2}$, a straightforward calculation yields that
%%%	$\mathfrak s(z)$ is related to the function
%%%	$\mathfrak m(z)$, as follows:
%%%	\begin{equation}
%%%		\mathfrak{s}(z)=(\mathfrak{s}(z)-\I)(\mathfrak{m}(z)+\I)^{-1}.
%%%		\label{SviaM}
%%%	\end{equation}
%%%\end{remark}

The next definitions apply to arbitrary values of $\varkappa,$ although in our analysis we will require the objects pertaining to $\varkappa=0$ and $\varkappa={\rm i}.$
%%%{\color{red} Here we will be interested in only two values of kappa, but everything will be valid also for other values of kappa.}
We abbreviate
%%\begin{align*}
\begin{equation*}
	\theta_\varkappa(z):=1-2({\rm i}-\mathfrak m(z))^{-1}
	\chi_\varkappa^+\,,\qquad z\in\complex_-,\qquad\qquad
	%%\label{eq:theta}
	%%\\
	\widehat{\theta}_\varkappa(z):=1-2({\rm i}+\mathfrak m(z))^{-1}\chi_\varkappa^-\,,\qquad z\in\complex_+,
	%%\label{eq:theta-hat}
\end{equation*}
%\end{align*}
where
\begin{equation}
	\label{eq:chi-def}
	\chi_\varkappa^\pm:=\frac{1\pm\varkappa}{2},
\end{equation}

The definition of the characteristic function ${\mathfrak s}$ and the
fact that $\mathfrak m$ is a Herglotz function \cite{MR0328627KK} allow us to write $\theta_\varkappa(z)$ and
$\widehat{\theta}_\varkappa(z)$ in terms of $\mathfrak s$ as follows:
\begin{equation}
%%\begin{aligned}
	\theta_\varkappa(z)=1+(\cc{\mathfrak s(\cc{z})}-1)\chi_\varkappa^+,\quad\quad
	%\text{ for all }
	\,z\in\complex_-,\qquad\qquad
	%%\label{eq:alternate-for-theta}
	%%\\[0.2em]
	\widehat{\theta}_\varkappa(z)=1+(\mathfrak s(z)-1)\chi_\varkappa^-,\quad\quad
	\,
	%\text{ for all }
	\,z\in\complex_+.
	%\label{eq:alternate-for-theta-hat}
%%\end{aligned}
\label{eq:alternate-for-theta-hat}
\end{equation}
%%%{\color{red}The formulae in the next lemma are analogous to \cite[Eqs. 2.18 and
%%%2.22]{MR2330831} and can be found in \cite[Lemma 3.1]{CherKisSilva1}.
%%%\begin{lemma}
%%%	\label{lem:ryzhov-formulae}
%%%	Suppose that $\varkappa\in{\mathbb C}.$ The following identities hold:
%%%	\begin{enumerate}[(i)]
%%%		\item
%%%		$\Gamma_0(A_{\I}-zI)^{-1}
%%%		=\theta_\varkappa(z)\Gamma_0(A_{\varkappa}-zI)^{-1}\quad\ \ \forall\,z\in\complex_-\cap\rho(A_\varkappa)$;\label{eq:ii-kappa}
%%%		\item
%%%		$\Gamma_0(A_{\varkappa}-zI)^{-1}
%%%		=\theta_\varkappa(z)^{-1}\Gamma_0(A_{\I}-zI)^{-1}\quad\ \ \forall\,z\in\complex_-\cap\rho(A_\varkappa)$;\label{eq:kappa-ii}
%%%		\item
%%%		$\Gamma_0(A_{\I}^*-zI)^{-1}
%%%		=\widehat{\theta}_\varkappa(z)\Gamma_0(A_{\varkappa}-zI)^{-1}\quad\ \ \forall\,z\in\complex_+\cap\rho(A_\varkappa)$;\label{eq:ii-kappa+}
%%%		\item
%%%		$\Gamma_0(A_{\varkappa}-zI)^{-1}
%%%		=\widehat{\theta}_\varkappa(z)^{-1}\Gamma_0(A_{\I}^*-zI)^{-1}\quad\
%%%		\ \forall\,z\in\complex_+\cap\rho(A_\varkappa)$\label{eq:kappa-ii+}\,.
%%%	\end{enumerate}
%%%\end{lemma}}

We will next use an explicit construction of the functional model for the operator family $A_\varkappa,$
%%%Let us introduce a Hilbert space
%%%serving as a functional model for the family of operators $A_\varkappa$. This
%%functional model was
introduced in \cite{MR0510053, MR0365199, Drogobych} and further
developed in \cite{MR573902, Ryzhov_closed, Ryzh_ac_sing, Tikhonov}. As the objects introduced above, it applies to arbitrary values of $\varkappa,$ although henceforth we only utilise it for the case $\varkappa=0.$  

 Our immediate goal is to represent the self-adjoint dilation \cite{MR2760647} of the dissipative operator $A_{\rm i}$ as an operator of multiplication. To this end, one first constructs a three-component model of the dilation, following Pavlov's procedure \cite{MR0365199, MR0510053, Drogobych} and then explicitly defining a unitary mapping to the so-called ``symmetric'' representation ${\mathfrak H}$ of the dilation. Namely, one starts with the Hilbert space   
%%%	A completely
%%%	non-selfadjoint dissipative operator admits %%%\cite{MR2760647} a
	%%%self-adjoint dilation.
	%According to \cite{MR2330831},
%%%	The dilation $\mathscr{A}=\mathscr{A}^*$ of the operator $A_{\I}$ is constructed following Pavlov's
%%%	procedure \cite{MR0365199, MR0510053, Drogobych}:  it is defined in the Hilbert space
	%%{\color{blue} Replace ${\mathcal K}$ with what it should be!}
	\begin{equation*}
		%\label{eq:wider-hilbert-pavlov}
		% $
		\mathscr{H}=
		L^2(\reals_-)\oplus H\oplus L^2(\reals_+),
		%  $
	\end{equation*}
	%where $\cH$ is the original Hilbert space, see
	%Section~\ref{sec:boundary-triples},
	%%%so that
	and the self-adjoint operator $\mathscr{A}$ in  $\mathscr{H}$ such that
	\begin{equation*}
		P_{H}(\mathscr{A}-zI)^{-1}\eval{H}=
		(A_{\I}-z I)^{-1}\,, \qquad z\in\complex_-,
	\end{equation*}
where $\eval{H}$ and $P_{H}$ stand for the restriction to and the orthogonal projection onto the subspace $\{0\}\oplus H\oplus\{0\},$ which we identify with $H.$  
	%%and therefore
	%%\begin{equation*}
	%% P_\mathcal{H}(\mathscr{A}-zI)^{-1}\eval{\mathcal{H}}=
	%% (A_{\I I}^*-z I)^{-1}\,,\qquad z\in\complex_+\,.
	%%\end{equation*}
		Then, as in the case of additive non-selfadjoint perturbations
	\cite{MR573902}, it is established
	\cite[Thm.\,2.3]{MR2330831} that
	%% $\mathfrak{H}$ serves as the
	%%functional model for the dilation $\mathscr{A}$ {\it i.e.}
	there exists
	an isometry $\Phi:\mathscr{H}\to{\mathfrak H}$
	%%: \mathscr{H}\to\mathfrak{H},$ 
	%%%which we will make explicit below in our particular setting, 
	such that
	%%$\mathscr{A}$ is transformed into the operator of multiplication by
	%%%the independent variable:
	%in $\mathfrak{H}:$
	%\begin{equation}
	% \label{eq:multiplication-operator}
	\[
	\Phi(\mathscr{A}-z I)^{-1}=(\cdot-z)^{-1}\Phi.
	\]
	%\end{equation}	
	%%%Furthermore, under this isometry %the space $\cH$ is mapped onto $K$:
	%\begin{equation*}
	%%%$$
	%%%\Phi H=K
	%%%$$
%%%	unitarily, where $H$ is understood as being embedded in $\mathscr{H}$ in the natural way, {\it i.e.}
%%%	$$
%%%	H\ni h\mapsto 0\oplus h\oplus0\in\mathscr{H}.
%%%	$$
%$0\oplus\cH\oplus0\subset\mathscr{H}.$
%In what follows we keep the label $\Phi$ for the restriction $\Phi\eval\cH,$ in hope that it does not lead to confusion.
%that is, the space in which
%the simple closed symmetric operator $A$ with equal deficiency indices
%is densely defined.
%%{\color{blue}REWRITE THE PARAGRAPH BELOW. We are working with a specific scalar situation} 
Next, we shall recall how this construction is made explicit in our particular case. 

%%Next we recall some related necessary notions.

%%{\color{blue} Everything is scalar here: boundary space, Hardy spaces, etc}

%%%%A function $f$ analytic on $\complex_\pm$ is said to be
%%%in the Hardy class $H^2_\pm$ when
%%%\begin{equation*}
%%%	\sup_{y>0}\int_\reals\abs{f(x\pm\I y)}^2dx<+\infty
%%%\end{equation*}
%%%({\it cf.} \cite[Sec.\,4.8]{MR822228}). Whenever $f\in H^2_\pm$,
%%%the left-hand side of the above inequality defines
%%%$\norm{f}_{H^2_\pm}^2$.

%%%The elements of the Hardy spaces $H^2_\pm$ are identified with
%%%their boundary values, which exist almost everywhere on the real line.
%%%We keep the same notation $H^2_\pm$ for the corresponding subspaces
%%%of $L^2(\reals)$ \cite[Sec.\,4.8, Thm.\,B]{MR822228}). By the
%%%Paley-Wiener theorem \cite[Sec.\,4.8, Thm.\,E]{MR822228}), one
%%%verifies that these subspaces are the orthogonal complements of each
%%%other.

%%%{\color{red} This is where we want to say that $\alpha=\sqrt{2}.$}

Following the argument of \cite[Thm.\,1]{MR573902}, it is shown in
\cite[Lem.\,2.4]{MR2330831} that
\begin{equation}
	\label{eq:naboko-thm-1}
\Gamma_0(A_{\I}-\cdot I)^{-1}h\in H_-^2\quad\text{and}
	\quad\Gamma_0(A_{\I}^*-\cdot I)^{-1}h\in H_+^2,
\end{equation}
where $H_\pm^2$ are the standard Hardy classes, see {\it e.g.} \cite[Sec.\,4.8]{MR822228}. 
%%%{\color{red} REMOVE As mentioned in Remark~\ref{rem:def-charasteristic-function}, the
%%%characteristic function $\mathfrak{s}$ given in
%%Definition~\ref{def:charasteristic-function} has nontangential limits
%%almost everywhere on the real line in the strong topology.}
Further, for a
two-component vector function 
$(\widetilde{g},g)^\top$ 
taking values
in $\mathbb{C}^{2}=\mathbb{C}\oplus\mathbb{C}$, one considers the integral
\begin{equation}
	\label{eq:inner-in-functional}
	\int_\reals\inner{\begin{pmatrix} 1 & \cc{\mathfrak{s}(s)}\\
			\mathfrak s(s) &
			1\end{pmatrix}\binom{\widetilde{g}(s)}{g(s)}}{\binom{\widetilde{g}(s)}{g(s)}}_{\mathbb{C}^{2}}ds,
\end{equation}
which is nonnegative, due to the contractive properties of
$\mathfrak s$.
%(see Remark~\ref{rem:def-charasteristic-function}).
The space
\begin{equation*}
%	\label{mathfrakH}
	\mathfrak{H}:=L^2\Biggl(\mathbb{C}^{2}; \begin{pmatrix}
		1 & \cc{\mathfrak s}\\
		\mathfrak s & 1
	\end{pmatrix}\Biggr)
\end{equation*}
is the completion of the linear set of two-component vector functions
$(\widetilde{g},g)^\top: {\mathbb R}\to\mathbb{C}^{2}$ in the norm
(\ref{eq:inner-in-functional}), factored with respect to vectors of zero norm.
%We shall denote this space
%by $\mathfrak{H}$.
Naturally, not every element of the set can be identified with a pair
$(\widetilde{g},g)^\top$ of two independent functions. Still, in what follows we keep the notation $(\widetilde{g},g)^\top$ for the elements of this space.

%%{\color{red} $g_+, g_-$}

Another consequence of the contractive properties of the
characteristic function ${\mathfrak s}$ is that for $\widetilde{g},g\in
L^2(\reals)$ one has
\begin{equation*}
	\norm{\binom{\widetilde{g}}{g}}_\mathfrak{H}\ge
%%	\begin{cases}
	\max\bigl\{\norm{\widetilde{g}+\cc{\mathfrak s}g}_{L^2(\reals)},\\
		\norm{\mathfrak s\widetilde{g}+g}_{L^2(\reals)}\bigr\}.
	%%\end{cases}
\end{equation*}
Thus, for every Cauchy sequence
$\{(\widetilde{g}_n, g_n)^\top\}_{n=1}^\infty$ with respect to the $\mathfrak{H}$-topology,
such that $\widetilde{g}_n,g_n\in L^2(\reals)$ for all
$n\in\nats$, the limits of $\widetilde{g}_n+\cc{\mathfrak s}g_n$ and
$\mathfrak s\widetilde{g}_n+g_n$ exist in $L^2(\reals)$, so that
$g_-:=\widetilde{g}+\cc{\mathfrak s}g$ and
$g_+:={\mathfrak s}\widetilde{g}+g$ can always be treated as $L^2(\reals)$ functions.

Consider the following orthogonal subspaces of $\mathfrak{H}:$
\begin{equation*}
	%\label{eq:D-spaces}
	D_-:=
	\begin{pmatrix}
		0\\[0.2em]
		{H}^2_-
	\end{pmatrix}\,,\quad
	D_+:=
	\begin{pmatrix}
		{H}^2_+\\[0.2em]
		0
	\end{pmatrix}\,.
\end{equation*}
We define the space
\begin{equation*}
	K:=\mathfrak{H}\ominus(D_-\oplus D_+),
\end{equation*}
which is characterised as follows (see {\it e.g.} \cite{MR0365199, Drogobych}):
\begin{equation*}
	K=\left\{\begin{pmatrix}
		\widetilde{g}\\
		g
	\end{pmatrix}\in\mathfrak{H}: %%\widetilde{g}+\cc{\mathfrak s}g
g_-\in  {H}^2_-\,,
	%%\mathfrak s\widetilde{g}+g
	g_+\in
	{H}^2_+\right\}\,.
	%\label{characterise_K}
\end{equation*}
The orthogonal projection $P_K$ onto the subspace
$K$ is given by (see {\it e.g.} \cite{MR0500225})
\begin{equation*}
	%\label{eq:pk-action}
	P_K
	\begin{pmatrix}
		\widetilde{g}\\
		g
	\end{pmatrix}
	=
	\begin{pmatrix}
		\widetilde{g}-P_+{g_-}\\[0.3em]
		g-P_-g_+
		%%(\mathfrak s\widetilde{g}+g)
	\end{pmatrix},
\end{equation*}
where $P_\pm$ are the orthogonal Riesz projections in $L^2(\mathbb{R})$ onto
${H}^2_\pm$.

The next theorem is a particular case of \cite[Thm. 4.1]{CherKisSilva1}, which  generalises \cite[Thm.
	2.5]{MR2330831}, and its form is similar to
	\cite[Thm.\,3]{MR573902}, which treats the case of additive
	perturbation (cf.  \cite{Mak_Vas, MR2330831, Ryzh_ac_sing,
		Ryzhov_closed} for the case of possibly non-additive
	perturbations).
	
%%%	{\color{red}The proof combines the arguments of
%%%	\cite{MR2330831} and \cite{MR573902}, taking advantage of the
%%%	similarity between the formulae
%%%	(\ref{eq:theta})--(\ref{eq:alternate-for-theta-hat}) and those of
%%%	\cite[Section\,2]{MR573902}.}
\begin{theorem}
	\label{thm:rhyzhov2.5}
	Let $R_{\varkappa}(z):=(A_\varkappa-z I)^{-1}$ for $z\in\rho(A_{\varkappa})$.
	\begin{enumerate}[(i)]
		\item  If $z\in\complex_-\cap\rho(A_\varkappa)$ and
		$(\widetilde{g}, g)^\top\in K$, then
		\begin{equation}
			\Phi R_{\varkappa}(z)\Phi^{*}\binom{\widetilde{g}}{g}=P_K\frac{1}{\cdot-z}
			\binom{\widetilde{g}}{g-\chi_\varkappa^+\theta^{-1}_\varkappa(z)
				%(\widetilde{g}+\cc{\mathfrak s}g)
				g_-(z)}\,.
			\label{representation_minus}
		\end{equation}
		\item  If $z\in\complex_+\cap\rho(A_\varkappa)$ and
		$(\widetilde{g}, g)^\top\in K$, then
		\begin{equation}
			\Phi R_{\varkappa}(z)\Phi^{*}\binom{\widetilde{g}}{g}=P_K\frac{1}{\cdot-z}
			\binom{\widetilde{g}-\chi_\varkappa^-\widehat{\theta}^{-1}_\varkappa(z)
				%(\mathfrak s\widetilde{g}+g)
				g_+(z)}{g}\,.
			\label{representation_plus}
		\end{equation}
		Here, ${g_\pm}(z)$
		%% and
		%%$(\mathfrak s\widetilde{g}+g)
		%%$g_+(z)$ 
		denote the values at $z$ of the analytic continuations of the functions
		%%$\widetilde{g}+\cc{\mathfrak s}g\in {H}^2_-$ and
		%%$\mathfrak s\widetilde{g}+g\in
		%%{H}^2_+$ 
		$g_\pm\in H_\pm^2$	into the corresponding half-plane. 
		%%lower half-plane and upper half-plane, respectively.
	\end{enumerate}
\end{theorem}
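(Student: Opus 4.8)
The plan is to obtain the two resolvent formulas by tracking the resolvent of the self-adjoint dilation $\mathscr A$ through the isometry $\Phi$ and then projecting, exactly as in the proof of \cite[Thm.\,2.5]{MR2330831}, but being careful about the (possibly non-additive) character of the perturbation encoded in the boundary triple. First I would recall the explicit form of the Pavlov three-component dilation $\mathscr A$ on $\mathscr H = L^2(\reals_-)\oplus H\oplus L^2(\reals_+)$, for which the resolvent $(\mathscr A - z)^{-1}$ acting on the middle component $H$ reproduces $(A_{\I}-z)^{-1}$ for $z\in\complex_-$, together with the smoothing channels described by the incoming/outgoing spaces $D_\pm$. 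The key analytic input, already available to us, is \eqref{eq:naboko-thm-1}: $\Gamma_0 R_{\I}(\cdot)h$ lies in $H^2_-$ and $\Gamma_0 R_{\I}^*(\cdot)h$ in $H^2_+$. This is what makes the boundary data of the resolvent honest Hardy-class functions and allows the passage to the symmetric representation $\mathfrak H$.

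The core computation, which I would carry out for case (i) ($z\in\complex_-$) and then mirror for case (ii), is as follows. Given $(\widetilde g,g)^\top\in K$, I would write $\Phi^*(\widetilde g,g)^\top$ back in $\mathscr H$, apply $(\mathscr A - z)^{-1}$ componentwise using the explicit dilation formulas, and then apply $\Phi$ again. On the two outer half-line components the dilation resolvent acts as a Volterra-type integral operator whose kernel is a translation semigroup, which in the $\mathfrak H$-picture turns into multiplication by $(\,\cdot\,-z)^{-1}$ followed by the projection $P_K$; this produces the leading term $P_K(\cdot-z)^{-1}(\widetilde g,g)^\top$. The middle, genuinely operator-theoretic, contribution is where the term $-\chi_\varkappa^+\theta_\varkappa^{-1}(z)g_-(z)$ comes from: one must identify the "defect" produced by projecting onto $K$, evaluate it using the fact that $g_-=\widetilde g + \cc{\mathfrak s}g\in H^2_-$ has an analytic continuation into $\complex_-$ with value $g_-(z)$ at $z$, and rewrite the resulting scalar factor in terms of $\theta_\varkappa$ via the identity \eqref{eq:alternate-for-theta-hat} relating $\theta_\varkappa$ to $\mathfrak s$ and $\chi_\varkappa^\pm$. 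The relation \eqref{SviaM}, $\mathfrak s = 1 - 2\I/(\I+\mathfrak m)$, is what lets one trade the Cayley-transformed characteristic function for the $M$-function when checking that the scalar coefficients match; since we only need $\varkappa\in\{0,\I\}$, the factors $\chi_\varkappa^\pm$ take the concrete values $\chi_0^\pm = 1/2$, $\chi_{\I}^\pm=(1\pm\I)/2$, and one can verify directly that $\theta_\varkappa(z)$ is invertible for $z\in\complex_-$ (respectively $\widehat\theta_\varkappa(z)$ for $z\in\complex_+$) because $\mathfrak m$ is Herglotz, so $\Im\mathfrak m(z)\neq 0$ off the real axis.

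For case (ii) I would use the adjoint relation: $(A_\varkappa^* - z)^{-1}$ for $z\in\complex_+$ corresponds under $\Phi$ to multiplication by $(\cdot - z)^{-1}$ on the "other" component, and the roles of $g_-,\widetilde g$ and $g_+,g$ are interchanged, with $\mathfrak s$ replaced by $\cc{\mathfrak s(\cc\cdot)}$; this gives \eqref{representation_plus} with the coefficient $-\chi_\varkappa^-\widehat\theta_\varkappa^{-1}(z)g_+(z)$ and $g_+$ continued analytically into $\complex_+$. The main obstacle I anticipate is not the translation-semigroup bookkeeping on the outer channels but the precise identification of the middle term: one has to be sure that the "correction" subtracted inside the second component is exactly $\chi_\varkappa^+\theta_\varkappa^{-1}(z)g_-(z)$ and not something differing by an element of $D_-\oplus D_+$ (which would be killed by $P_K$ anyway) — in other words, that the particular representative chosen is consistent with membership in $K$. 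This is handled by checking that, after subtracting the correction, the resulting pair still satisfies $g_-\in H^2_-$, $g_+\in H^2_+$, i.e. lies in $K$, using that $\theta_\varkappa^{-1}(z)g_-(z)$ is a constant (in the spectral variable) times the appropriate Hardy-class generator. Since the statement is quoted as a particular case of \cite{CherKisSilva1}, I would finally remark that the only thing to verify beyond the cited general theorem is that our concrete boundary triple \eqref{triple_expl} and the associated $M$-function \eqref{Mfunction_expl} fall within its hypotheses, which is immediate from the Herglotz property of $\mathfrak m$ and the explicit form \eqref{SviaM} of $\mathfrak s$.
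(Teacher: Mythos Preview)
The paper does not supply its own proof of this theorem: it is stated as a particular case of the general result in \cite{CherKisSilva1}, which generalises \cite[Thm.\,2.5]{MR2330831} (see also \cite[Thm.\,3]{MR573902} for the additive-perturbation prototype). Your closing remark already identifies this correctly, and the sketch you give --- passing through Pavlov's three-component dilation, using \eqref{eq:naboko-thm-1} to place the boundary data in the Hardy classes, and identifying the scalar correction via \eqref{eq:alternate-for-theta-hat} and \eqref{SviaM} --- is a faithful outline of the mechanism in those references, so there is no conflict with the paper.

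One small point worth tightening: the theorem is stated for arbitrary $\varkappa$, and the paper in fact applies it in Section~\ref{Clark_constr} for real $\varkappa$ (ultimately $\varkappa=0$), not for $\varkappa=\I$. So the invertibility of $\theta_\varkappa(z)$ on $\complex_-\cap\rho(A_\varkappa)$ (respectively $\widehat\theta_\varkappa(z)$ on $\complex_+\cap\rho(A_\varkappa)$) should be argued in general --- it follows from the Herglotz property of $\mathfrak m$ together with the assumption $z\in\rho(A_\varkappa)$, since a zero of $\theta_\varkappa$ would force $z$ into the spectrum of $A_\varkappa$ --- rather than checked only for the two special values you list.
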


 In the work \cite{KisNab}, concerning the matrix model for non-selfadjoint operators with almost Hermitian spectrum, it is shown (see \cite[Theorem 3.3]{KisNab}) that provided ${\mathfrak s}$ is an inner function (which is the case we are dealing with in the present paper), the Hilbert space $H$ is unitarily equivalent to the spaces 
	 \begin{equation*}
	 K_{\mathfrak s}:= H_+^2\ominus {\mathfrak s}H^2_+,\qquad K^\dagger_{\mathfrak s}:= H_-^2\ominus \overline{\mathfrak s}H^2_-.
	% \label{K_spaces}
	 \end{equation*}
	The related unitary mappings are provided by the formulae (cf. (\ref{eq:naboko-thm-1}))
	
	%%%	\begin{definition}[\cite{MR2330831}]
	%%%		\label{def:F-mappings}
	%%The mappings $\mathscr{F}_\pm:\mathscr{H}\to
	%%	L_2(\reals,\mathcal{E})$ are defined by
	%%	\begin{equation*}
	%%		\mathscr{F}_+\begin{pmatrix}
	%%				v_-\\ v\\ v_+
	%%		\end{pmatrix}
	\begin{equation}
		%%\begin{aligned}
	H\ni v\mapsto-\frac{1}{\sqrt{\pi}}\Gamma_0(A_{\rm i}-\cdot)^{-1}v=g_+\in K_{\mathfrak s},\qquad\quad
	%%\\[0.4em]
	%%:= H_+^2\ominus {\mathfrak s}H^2_+.
	%%+\bigl(S^*\widehat{v}_-+\widehat{v}_+\bigr)(\cdot)
	%%\]
	%%	\end{equation*}
	%%	and
	%%	\begin{equation*}
	%%		\mathscr{F}_-\begin{pmatrix}
	%%			v_-\\ v\\ v_+
	%%		\end{pmatrix}
%%	Similarly, the mapping
	H\ni v\mapsto-\frac{1}{\sqrt{\pi}}\Gamma_0(A_{-{\rm i}}-\cdot)^{-1}v=g_-\in K^\dagger_{\mathfrak s}.
	%%:= H_-^2\ominus \overline{\mathfrak s}H^2_-.
	%%\end{aligned}
\label{unit_form}
	\end{equation}
Also note that the unitary equivalence between $K_{\mathfrak s},$  $K^\dagger_{\mathfrak s}$ can be obtained via the element-wise equality
 \[
 K_{\mathfrak s}={\mathfrak s}K^\dagger_{\mathfrak s},
 \]
 where it is understood that the multiplication by ${\mathfrak s}$ is applied to the traces of $K_{\mathfrak s}, K^\dagger_{\mathfrak s},$ see also the corresponding statement pertaining to operators of BVPs for PDEs in \cite{CherKisSilva2}. 

%%	establishes another isometry between 
	%%+\bigl(\widehat{v}_-+S\widehat{v}_+\bigr)(\cdot),
	%%\end{equation*}
	%%where $\widehat{v}_-$ and $\widehat{v}_+$ are the Fourier transforms of the functions $v_-$ and $v_+,$ respectively. We recall that throughout the paper, the dot $\cdot$ represents the real independent variable.
	%%%	\end{definition}	
	%	\[
	%	H\simeq K\simeq K_{\mathfrak s}:= H_+^2\ominus {\mathfrak s}H^2_+\ni g_+,
	%   \]
	%%% where the first equivalence is given by the isometry $\Phi,$ the second equivalence is provided by the last equality in (\ref{eq:my-first-parenthesis}), and the third equivalence  is given by picking the same function $g_+$ and composition of
	%%these two unitary equivalent transformation gives us 
	
%%	(see Theorem 3.3 there) 

\section{Explicit functional model representation}
\label{expl_sec}

%%%{\color{blue} We would like to obtain, for our case, an analogue of .... 
	
	This section contains the main results of the paper, namely the construction of an explicit functional model for the operators $A_\varkappa,$ i.e., a representation of the Hilbert space $H$ as a space of square summable functions over a measure with respect to which the operator is the multiplication by the independent variable.
	
	We start by noticing that \cite[Theorem 3.3]{KisNab} provides a description of the original Hilbert space $H,$ via its unitary equivalence to each of the two spaces 
	%%(see (\ref{K_spaces})) 
	$K_{\mathfrak s},$ $K^\dagger_{\mathfrak s}.$  
	%%with the unitary equivalence 
	In our particular setup of extensions of minimal symmetric operators, this unitary equivalence is provided by the formulae (\ref{unit_form}).
	
	 We then use the representation of the inner product in $H$ in terms of the resolvent $R_\varkappa(z)$ via contour integration in the vicinity of the real line. Using the formulae (\ref{representation_minus})--(\ref{representation_plus}) and passing to the limit as the contour approaches a sum of integrals over the real line, we obtain one of the measures introduced in \cite{Clark, Aleksandrov_pseudocontinuable} (``Alexandrov-Clark measures") and subsequently studied in \cite{Poltora}; see also the survey \cite{Poltora_Sarason} and a recent development \cite{LMT}. In our context, this measure emerges from the Nevanlinna representation of the $M$-function ${\mathfrak m}.$ 
	 
	 We note that the resolvent representation provided by Theorem \ref{thm:rhyzhov2.5} can be shown to yield that $R_0(z)$ is the resolvent of a rank-one self-adjoint perturbation of a Toeplitz operator \cite{KisNab}, and thus the original argument of Clark \cite{Clark}, leading to the emergence of Aleksandrov-Clark measures and the functional model for the operator family $A_\varkappa,$ applies in our case. From this point of view, one can see the argument of the present section and Section \ref{sec_fin} as an independent proof of Clark's theorem, providing a straightforward and explicit formulae for the unitary operators mapping the original Hilbert space $H$ to the functional model. Although, for the reasons given above, the results to follow are not new on the abstract level, they yield an explicit functional model construction in terms of the objects naturally associated with the operator under consideration. 
	 %%with $\varkappa=0$

	 %%%%while the passage to the limit is based on a version of the result of \cite{Aleksandrov_pseudocontinuable} concerning continuity properties of ... 
	 %%  (weak-*) limit 
	 
	 %%There is a space $K_\theta$ here, and the associated Clark measure $d\mu.$
	  %%To make this connection explicit}

\subsection{Construction of a Clark-type measure for the model representation}
\label{Clark_constr}

Suppose that $\varkappa\in{\mathbb R}.$ For $\delta>0$ and $N\in{\mathbb R}_+$ that does not belong to the spectrum of the operator $A_\varkappa,$ denote by  $\Gamma_{\delta, N}$ 
%%{\color{red}  $0<\delta<|\Im z|,$} $N\in{\mathbb R}_+$
%%$N>0,$  
the boundary of the rectangle 
\begin{equation}
	\{\zeta\in{\mathbb C}: |\Re\zeta|<N,\  |\Im\zeta|<\delta\}
	\label{rect}
\end{equation} 
and by $P_N$ the spectral projection for $A_\varkappa$ onto the interval $[-N, N].$ We also use the shorthand $u_N=P_Nu$ for all $u\in H.$ 

According to the Dunford-Riesz functional calculus \cite[Section XV.5]{DS}, one has, for all $\delta>0,$
\begin{equation}	
	\label{first-contour}
	P_N=-\frac 1{2\pi{\rm i}}
%\lim_{\delta\to0}
%%\lim_{N\to\infty}
	%\lim_{\delta\to 0}
	\ointctrclockwise_{\Gamma_{\delta, N}} R_\varkappa (\lambda) d\lambda
	=\frac 1{2\pi{\rm i}}
	%\lim_{\delta\to0}
%%	\lim_{N\to\infty}
	%\lim_{\delta\to 0}
	\ointclockwise_{\Gamma_{\delta, N}} R_\varkappa (\lambda) d\lambda,
\end{equation}
where $\Gamma_{\delta, N}$ is traced anticlockwise in the first integral in (\ref{first-contour})
and clockwise in the second integral in (\ref{first-contour}). Notice that $P_N\to I$ in the sense of strong operator convergence.
%%%{\color{red} The limits are understood in the strong sense. Plus add about the choice of the contour. Plus introduce $P_N.$}
%%The dependence of $N_\delta$ on $\delta$ is such that $N_\delta\to\infty$ as $\delta\to0$ and will be specified later. 
In what follows, we use
the notation
\begin{equation}
	\label{eq:lambda-expression}
	t:=\lambda-{\rm i}\delta=\Re\lambda\in \mathbb{R}.
\end{equation}
On the basis of (\ref{first-contour}), one has that the following 
analogue of the inverse Cauchy-Stieltjes formula:
%% holds, that is,
\begin{equation}
	\label{eq:my-unbounded-cauchy}
\begin{aligned}
	\inner{u_N}{v_N}_H
	%=
	%%&=
	%%%\frac 1{2\pi\I}
	%%\lim_{\delta\to0}
	%\lim_{\delta\to0}
	%%%\int^{\infty}_{-\infty}
	%%%\left\langle
	%%%(R_\varkappa(\lambda)- R_\varkappa(\overline{\lambda}))u,v\right\rangle_Hdt\\[0.4em]
%%	&
	=\frac 1{2\pi\I}
		%%\lim_{\delta\to0}
		%\lim_{N\to\infty}
	%\lim_{\delta\to0}
	\int^{N}_{-N}
	\left\langle
	(R_\varkappa(\lambda)- R_\varkappa(\overline{\lambda}))u,v\right\rangle_Hdt+o(1),
	\qquad\forall u,v\in H,
\end{aligned}	
\end{equation}
where the term $o(1)$ goes to zero as $\delta\to0$ uniformly in $N.$

%%%%In what follows, for $(\tilde h,h)^{\top}\in K$ we set $h_{+}:=\mathfrak{s}\tilde h+h$ and $h_{-}:=\tilde h +\cc{\mathfrak s}h$. Note that $h_\pm$ is an element of $H^2_\pm,$ %%and $H^2_-$, respectively, 
%%%%so its boundary values is an element of $L_2(\reals).$ 

Assuming $(\widetilde{g}, g)^\top, (\widetilde{f}, f)^\top\in K$, we set $u=\Phi^{*}(\widetilde{g}, g)^\top$ and
$v=\Phi^{*}(\widetilde{g}, g)^\top$ and, using Theorem \ref{thm:rhyzhov2.5}
%%the {\color{red}above theorem,} 
for $\lambda\in{\mathbb C}_+,$ write
\begin{gather}
\hspace{-3cm}	\left\langle (R_\varkappa(\lambda)- R_{\varkappa}(\overline{\lambda}))u,v\right\rangle_H
	=\left\langle P_K \frac 1{\cdot-\lambda}
	\binom{\widetilde{g}-\chi_\varkappa^-\widehat{\theta}^{-1}_\varkappa(\lambda)
		%%(\mathfrak s\widetilde{g}+g)
		g_+(\lambda)}{g},
	\left(\begin{array}{c}\tilde{f}\\f\end{array}\right)\right\rangle_{\mathfrak H}\nonumber
	\\[0.4em]
	%%\hspace{+3cm}
	-\left\langle P_K \frac 1{\cdot-\overline{\lambda}}
	\left(\begin{array}{c}\tilde{g}\\g-\chi_{\varkappa}^+\theta_\varkappa^{-1}(\lambda)
		%%(\tilde{g}+\cc{\mathfrak s}g)
		g_-(\lambda)\end{array}\right),
	\left(\begin{array}{c}\tilde{f}\\f\end{array}\right)\right\rangle_{\mathfrak H}
	\nonumber
	\\[0.4em]
	\hspace{+6cm}=\left\langle \left(\frac 1{\cdot-\lambda}-\frac
	1{\cdot-\overline{\lambda}}\right)\left(\begin{array}{c}\tilde{g}\\g\end{array}\right)
	\left(\begin{array}{c}\tilde{f}\\f\end{array}\right)\right\rangle_{\mathfrak H}
	%%=\Bigg(\left\langle \frac 1{\cdot-\lambda}\left(\begin{array}{c}\tilde{g}\\g\end{array}\right),
	%%\left(\begin{array}{c}\tilde{f}\\f\end{array}\right)\right\rangle-\left\langle\frac 1{\cdot-\overline{\lambda}}\left(\begin{array}{c}\tilde{g}\\g\end{array}\right)%%,
	%%\left(\begin{array}{c}\tilde{f}\\f\end{array}\right)\right\rangle_{\mathfrak H}\Bigg)
	-\mathcal{G}(\lambda,\overline{\lambda}),
	\label{my-last-equality}
\end{gather}
where
\begin{equation}
	\begin{aligned}
	\mathcal{G}(\lambda,\overline{\lambda})&:=\int_{\mathbb R}\frac{1}{k-\lambda}\chi_\varkappa^-\widehat{\theta}_{\varkappa}^{-1}(\lambda)g_+(\lambda)
	\overline{f_-(k)}dk
	%{\phantom \right\}}
	%%\\[0.4em]
	%%&\hspace{15mm}
	-\int_{\mathbb R}\frac 1{k-\overline{\lambda}}\chi_\varkappa^+\theta_{\varkappa}^{-1}(\overline{\lambda})g_-(\overline{\lambda})\overline{f_+(k)} dk\\[0.4em]
%%	&=2\pi{\rm i}\bigl\{\chi_\varkappa^-\widehat\theta_{\varkappa}^{-1}(\lambda)g_+(\lambda)
%%	\overline{f_-(\overline{\lambda})}
	%\\
	%&\hspace{15mm}
%%%	 +\chi_\varkappa^+\theta_{\varkappa}^{-1}(\overline{\lambda})g_-(\overline{\lambda})
%%	\overline{f_+(\lambda)}\bigr\}
%%	\\[0.4em]
	&=2\pi {\rm i}\bigl\{\chi_\varkappa^-\widehat\theta_{\varkappa}^{-1}(\lambda) g_+ (\lambda)\overline{f_-(\overline{\lambda})}+
	\chi_\varkappa^+\theta_{\varkappa}^{-1}(\overline{\lambda}) g_-(\overline{\lambda})\overline{f_+(\lambda)}\bigr\}.
\end{aligned}
\label{gcalc1}
\end{equation}
For the second equality in (\ref{gcalc1}) we have used the fact that for $h\in H^\pm_2$
%%($H_2^-$), 
the following identities hold:
\begin{equation}
	\int_\mathbb{R} \frac{h(k)}{k-z}dk=\pm 2\pi{\rm i} h(z),
	%%\quad
	%%\left(\int_\mathbb{R}\frac{h(k)}{k-\overline{\lambda}}dk=
	%%-2\pi i h(\overline{\lambda})\right),
	\qquad z \in \mathbb{C}^\pm.
	\label{hid}
\end{equation}

%%Henceforth, to simplify the writing of some expressions, we
%%introduce the following notation.
Taking into account
\eqref{eq:lambda-expression}, the first term on the right-hand side of 
\eqref{my-last-equality} can be written as follows:
\begin{equation}
	\begin{aligned}
	\left\langle \left(\frac 1{\cdot-\lambda}-\frac
	1{\cdot-\overline{\lambda}}\right)\left(\begin{array}{c}\tilde{g}\\g\end{array}\right),
	\left(\begin{array}{c}\tilde{f}\\f\end{array}\right)\right\rangle_{\mathfrak H}
	=
	\int_{-\infty}^{\infty}\frac
	{2{\rm i}\delta}{(k-t)^2+\delta^2}\mathcal{F}(k)dk,
	\end{aligned}
	\label{first_par}
\end{equation}
where
\begin{equation*}
	\mathcal{F}(k):=\left\langle
	\left(\begin{array}{cc}1&\cc{\mathfrak{s}(k)}\\
		\mathfrak s(k)&1\end{array}\right)\left(\begin{array}{c}\tilde g(k)\\ g(k)\end{array}\right),
	\left(\begin{array}{c}\tilde f(k)\\ f(k)\end{array}\right)\right\rangle_{{\mathbb C}^2}.
\end{equation*}

Now, integrating (\ref{first_par}) with respect to $t\in[-N, N]$ (where $t$ and $\lambda$ are related via \eqref{eq:lambda-expression}),
%%% and passing to the limits as $\delta\to0$ and then as $N\to\infty,$
%%the first parenthesis of \eqref{my-last-equality} 
%%as in \eqref{eq:my-unbounded-cauchy} 
one obtains 
%%{\color{red} Fatou lemma? Using the uniform integrability of Poisson kernels.}
\begin{equation}
	\label{eq:my-first-parenthesis}
	\begin{split}
		%%\lim_{N\to\infty}
		\int_{-N}^{N}\biggl\langle
		&\left(\frac{1}{\cdot-\lambda}-\frac{1}{\cdot-\overline{\lambda}}\right)
		\left(\begin{array}{c}\tilde{g}\\ g\end{array}\right),
		\left(\begin{array}{c}\tilde{f}\\ f\end{array}\right)
		\biggr\rangle_{\mathfrak H}dt
		%%\\[0.3em]
		%%&
		=\frac 1{2\pi \I}
		%%%\lim_{N\to0}\lim_{\delta\to0}
		\int_{-N}^{N}\int_{-\infty}^{\infty} 
		\frac {2{\rm i}\delta}{(k-t)^2+\delta^2}\mathcal{F}(k)dk dt\\[0.4em]
		&=
		%%\lim_{\delta\to 0}
		\int\limits_{-N}^{N}
		\mathcal{F}(k)
%%        \lim_{N\to\infty}\lim_{\delta\to0}
		\int\limits_{-N}^{N}
		\frac \delta\pi
		\frac1{(k-t)^2+\delta^2}dtdk+o(1)
		%%%\\
		%%%&=\int\limits_{-\infty}^{\infty} %%%\mathcal{F}(k)\bigl(1+o(1)\bigr)dk=\left\langle\left(\begin{array}{c}\tilde
		%%%	g\\g\end{array}\right),\left(\begin{array}{c}\tilde
		%%%	f\\f\end{array}\right)\right\rangle_{\mathfrak{H}}\bigl(1+o(1)\bigr)\\[0.4em]
		%%%&
		=\int\limits_{-N}^{N}g_{+}(k)\cc{f_{+}(k)}dk+o(1).
	\end{split}
\end{equation}
%%%where in fourth expression $o(1)$ goes zero as $\delta\to0$ uniformly in $k.$ 
%%and in the fifth and sixth expressions $o(1)\to0.$  
%%%%%For the second equality we use a version of the Fatou's lemma, 
%%the uniform integrability of the expressions
%%\[
%%\frac \delta\pi
%%\frac1{(k-t)^2+\delta^2},
%%\]
%%%%and for the last equality we take advantage the fact that $\mathfrak s(k)\cc{\mathfrak s(k)}=1$ for almost all $k\in\reals.$
In view of \eqref{my-last-equality}, we rewrite
\eqref{eq:my-unbounded-cauchy} by substituting \eqref{gcalc1} and
\eqref{eq:my-first-parenthesis} into it:
%% {\color{red} SOKRATIT'?}
\begin{equation*}
	\inner{u_N}{v_N}_H=\int\limits_{-N}^{N}g_{+}(k)\cc{f_{+}(k)}dk-
	%%\lim_{\delta\to 0}
	%%\lim_{N\to\infty}\lim_{\delta\to 0}\lim_{N\to\infty}
	\int\limits_{-N}^{N}
	\left( \chi_{\varkappa}^{-}\widehat\theta_{\varkappa}^{-1}(\lambda)
	g_{+}(\lambda)\cc{f_{-}(\overline{\lambda})}+
	\chi_{\varkappa}^+\theta_{\varkappa}^{-1}(\overline{\lambda})
	g_{-}(\cc{\lambda})\cc{f_{+}(\lambda)}\right)dt+o(1).
	%\label{kappa_arb}
\end{equation*}
%%The left-hand side of the last equality does not depend on $\varkappa$. 
%%In particular, 

Consider a region $\Omega\subset{\mathbb C}$ containing the real line that has no poles or zeros of ${\mathfrak s}.$ 
%%for all $\delta,$ 
This is possible due to the fact that $A_{\rm min}$ is simple. Indeed, the operator $A_{\rm i}$ is completely non-selfadjoint and dissipative, which prevents it from having real eigenvalues. This, in turn, ensures that the zeros (and hence the poles as well) of ${\mathfrak s}$ are also away from the real line, as they coincide with the spectrum of $A_{\rm i}$ (and its adjoint, respectively).

Furthermore, for each $N$ as above, choose $\delta_N$ so that
%%% Choose $N_\delta$ so that for all $\delta$ one has
 $\Gamma_{\delta, N}\subset\Omega$ for all $\delta<\delta_N,$ where $\delta$ is defined in (\ref{rect}) and is related to $\lambda$ via (\ref{eq:lambda-expression}).
 %%the rectangle (\ref{rect})
%%  $t\in[-N,N]\times [-\delta,\delta]$ of sufficient
%%small width there are
In the context of the present paper, we are interested in the member of the family $A_\varkappa$ that corresponds to the value $\varkappa=0,$ which we assume to be selected from now on. However, the argument to follow can be extended to also work with other values $\varkappa\in{\mathbb R}.$

Aiming at the operator ${\mathcal A}^\tau_{\rm hom},$ $\tau\in[-\pi, \pi),$ introduced in Section \ref{section:examples}, in the remainder of this section we set $\varkappa=0.$ Taking into account \eqref{eq:chi-def}
%%\eqref{eq:alternate-for-theta} 
and \eqref{eq:alternate-for-theta-hat},
one obtains
\begin{equation}
\begin{aligned}
	\inner{u_N}{v_N}_H&=\int\limits_{-N}^{N}g_{+}(k)\cc{f_{+}(k)}dk-
	%%\lim_{N\to\infty}\lim_{\delta\to
	%%	0}
	\int\limits_{-N}^{N}
	\left(\frac{g_{+}(\lambda)\cc{f_{-}(\overline{\lambda})}}{1+\mathfrak
		s(\lambda)}+\frac
	{g_{-}(\cc{\lambda})\cc{f_{+}(\lambda)}}{1+\cc{\mathfrak s(\lambda)}}
	\right)dt+o(1)\\
	&=\int\limits_{-N}^{N}g_{+}(k)\cc{f_{+}(k)}dk-
	%%\lim_{N\to\infty}\lim_{\delta\to
	%%	0}
	\int\limits_{-N}^{N}
	\left(\frac{\mathfrak s(\lambda)g_{+}(\lambda)\cc{f_{+}(\lambda)}}{1+\mathfrak
		s(\lambda)}+\frac
	{\cc{\mathfrak s(\lambda)}g_{+}(\lambda)\cc{f_{+}(\lambda)}}{1+\cc{\mathfrak s(\lambda)}}
	\right)dt+o(1),
\end{aligned}
\label{uv_id}
\end{equation}
where for the last equality we have used the identities
 %%{\color{red} An argument by analytic continuation. Complete non-selfadjointness.} 
\begin{equation}
%%	\begin{aligned}
	h_-(z)=\overline{{\mathfrak s}(\overline{z})}h_+(\overline{z}),\quad z\in{\mathbb C}_-,\qquad\qquad
	%%\\[0.3em]
	h_+(z)={\mathfrak s}(z)h_-(\overline{z}),\quad z\in{\mathbb C}_+,
%%%	g_{-}(\cc{\lambda})=\cc{\mathfrak
%%%		s(\cc{\lambda})}g_{+}(\cc{\lambda}),
	%%\quad
	%%\text{ and }
%%%	\qquad \cc{f_{-}(\cc{\lambda})}=\mathfrak s(\lambda)\cc{f_{+}(\cc{\lambda})},\qquad \lambda\in{\mathbb C}_+.
%%\end{aligned}
	\label{fg_id}
\end{equation}
obtained by analytic continuation into $\Omega.$
%% a region containing $\Gamma_\delta$ for all $\delta.$ 
Furthermore, noticing that
\begin{equation}
{\mathfrak s}(1+{\mathfrak s})^{-1}
%%=({\mathfrak s}+1-1)\frac{2}{1+{\mathfrak s}}
=1-(1+{\mathfrak s})^{-1},
\label{Sid}
\end{equation}
we rewrite (\ref{uv_id}) as follows:
%%and therefore ({\color{blue} using analyticity in the rectangle as shown above})
\begin{align*}
	%\begin{aligned}	
		\langle u_N, v_N\rangle_H&-\int_{-N}^N g_+ (t)
			\overline{f_+(t)}dt\\
		&=
		%%-\lim_{N\to\infty}\lim_{\delta\to 0}
		%%-\lim_{\delta\to0}
		-\int\limits_{-N}^{N}\biggl\{g_+ (\lambda)
		\overline{f_+(\lambda)}-\frac{1}{1+{\mathfrak s}(\lambda)} g_+ (\lambda)
		\overline{f_+(\lambda)}
		%%\\
		%%&\hspace{+150pt}
		%%+\frac{2}{z-\overline{\lambda}}g_+ (\overline{\lambda})\overline{f_+(\lambda)}
		+\frac{\overline{{\mathfrak s}(\lambda)}}{1+\overline{{\mathfrak s}(\lambda)}}g_+ (\lambda)\overline{f_+(\lambda)}\biggr\}dt+o(1)\\
		&=-\int_N^N g_+ (t)
			\overline{f_+(t)}dt
			%%\\
		%%&
		%%%%+\lim_{\delta\to0}
		%%\lim_{N\to\infty}\lim_{\delta\to 0}
		+\int\limits_{-N}^{N}\biggl\{\frac{1}{1+{\mathfrak s}(\lambda)}-\frac{\overline{{\mathfrak s}(\lambda)}}{1+\overline{{\mathfrak s}(\lambda)}}\biggr\}g_+(t)\overline{f_+(t)}dt+o(1).
	%\end{aligned}
	%\label{Rlim1}
\end{align*}
%%%=======================
Using the identity 
\begin{equation}
\frac{\overline{{\mathfrak s}(\lambda)}}{1+\overline{{\mathfrak s}(\lambda)}}=\frac{1}{\overline{{\mathfrak s}(\lambda)}^{-1}+1}=\frac{1}{{\mathfrak s}(\overline{\lambda})+1}, \qquad \lambda\in{\mathbb C}_+,
\label{another_sid}
\end{equation}
we therefore have
%%%\[-2 \pi i\left \langle \left(\begin{array}{cc}I&S^*\\S&I\end{array}\right)\left(\begin{array}{c}\tilde
%%%	g\\g\end{array}\right), \left(\begin{array}{c}\tilde f\\f\end{array}\right)\right\rangle_{E\oplus E}(\lambda)+
%%%\bar o (1)+2 \pi i \left\langle\left[(1+{\mathfrak s}(\lambda))^{-1}-(1+{\mathfrak s}(\lambda))^{-1}{\mathfrak s}(\lambda)\right]g_+(t), f_+(t)\right\rangle=\]
%%%as 
\begin{equation}
	\langle u_N, v_N\rangle_H=
	%%&=
	%%\lim_{N\to\infty}\lim_{\delta\to 0}
	%%\lim_{\delta\to0}
	\int\limits_{-N}^{N}\biggl\{\frac{1}{1+{\mathfrak s}(\lambda)}-\frac{1}{1+{\mathfrak s}(\overline{\lambda})}\biggr\}
	g_+(t)\overline{f_+(t)}dt.
	%%%\\[0.4em]
	%%%&=
	%%\lim_{\delta\to0}
	%%%\int\limits_{-\infty}^\infty\chi_{[-N, N]}(t)\biggl\{\frac{1}{1+{\mathfrak s}(\lambda)}-\frac{1}{1+{\mathfrak s}(\overline{\lambda})}\biggr\}g_+(t)\overline{f_+(t)}dt,
	\label{pre_eps}
\end{equation}
%%%where $\chi_{[-N, N]}$ is the indicator function of the interval $[-N, N].$

Finally, we combine this with the representation
\begin{equation}
\frac{1}{1+{\mathfrak s}(\lambda)}=\frac{1}{2}\biggl(1+\frac{\rm i}{{\mathfrak m}(\lambda)}\biggr)=C_0+C_1\lambda-\frac{\rm i}{2}\int_{\mathbb R}\biggl(\frac{1}{\sigma-\lambda}-\frac{\sigma}{1+\sigma^2}\biggr)d\mu(\sigma),
%%-\frac{\rm i}{2}\int\limits_{\mathbb{R}} \frac{d\mu(\sigma)}{\sigma-\lambda},
\label{Nev_rep}
\end{equation}
where $C_0, C_1\in{\mathbb C},$ and $\mu$ is the measure of the Nevanlinna representation of the Herglotz function $-{\mathfrak m}^{-1},$ see {\it e.g.} \cite[Section 5.3]{MR822228}. The formula (\ref{Nev_rep}) implies, in particular, that
\begin{equation}
\frac{1}{1+\mathfrak s(\lambda)}-\frac{1}{1+
	\mathfrak s(\overline{\lambda})}=-\frac{\rm i}{2}\int\limits_{\mathbb{R}}\frac{2{\rm i} \delta d\mu(\sigma)}
{(\sigma-t)^2+\delta^2}=
%\]\[
%=
\pi\mathfrak{P}_\delta
(\mu)(t),
\label{still_rep}
\end{equation}
where $\mathfrak{P}_\delta$ stands for the Poisson transformation. Next, note that $\mu$ is a Clark measure \cite{Aleksandrov_pseudocontinuable}, due to (\ref{still_rep}) and the identity
\[
\frac{1}{1+\mathfrak s(\lambda)}-\frac{1}{1+
	\mathfrak s(\overline{\lambda})}=\Re\frac{1-{\mathfrak s}(\lambda)}{1+{\mathfrak s}(\lambda)},\qquad \lambda\in{\mathbb C}_+,
\]
obtained directly from (\ref{another_sid}).

Substituting (\ref{still_rep}) into (\ref{pre_eps})  and taking into
account the weak*-convergence 
\cite[VI Sec.\,B]{MR1669574} of the Poisson transformations
 ${\mathfrak P}_\delta$ as well as the regularity \cite{Nikolski} of functions in $K_{\mathfrak s}$ guaranteed by the analytic properties of ${\mathfrak s}$ discussed above,  we pass to the limit as $\delta\to0$ in (\ref{pre_eps}), to obtain 
%%Combining this with the representation (\ref{stil_rep}) yields
\begin{equation*}
\langle u_N, v_N\rangle=\pi\int_{-N}^N g_+ (t)\overline{f_+(t)}d\mu(t)+o(1).
%\label{inner_uv}
\end{equation*}
Finally, passing to the limit in the last identity as $N\to\infty$ and using the fact that $u_N\to u,$ $v_N\to v$  yields 
\begin{equation*}
	\langle u, v\rangle=\pi\int_{-\infty}^\infty g_+ (t)\overline{f_+(t)}d\mu(t),\qquad u, v\in H.
	%\label{inner_uv}
\end{equation*}

We have thus established the following theorem.
\begin{theorem}
	\label{first_thm}
	The Hilbert space $H$ is isometric to the space 
	$L_2\bigl(\mathbb{R}, \pi d\mu\bigr),$ where the measure $\mu$ is provided by (\ref{still_rep}). This  isometry is the composition of the first formula in (\ref{unit_form}) and the embedding of $K_{\mathfrak s}$ into $L_2\bigl(\mathbb{R}, \pi d\mu\bigr)$ realised by taking the boundary values on the real line of functions in $K_{\mathfrak s},$ which exist $\mu$-almost everywhere.
\end{theorem}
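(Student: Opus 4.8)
The plan is to read the statement off the identity
\[
\langle u,v\rangle_H=\pi\int_{\mathbb R}g_+(t)\,\overline{f_+(t)}\,d\mu(t),\qquad u,v\in H,
\]
established immediately above, in which $u\mapsto g_+$ and $v\mapsto f_+$ denote the passage to the $K_{\mathfrak s}$-component of the Pavlov model; everything else is bookkeeping of the maps involved. First I would recall, from \cite[Thm.\,3.3]{KisNab} as recorded in the first line of (\ref{unit_form}), that --- $\mathfrak s$ being inner, as established above --- the operator $V\colon H\to K_{\mathfrak s}$, $Vu:=-\pi^{-1/2}\Gamma_0(A_{\rm i}-\cdot)^{-1}u=g_+$, is unitary. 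Second, I would observe that the measure $\mu$ of (\ref{still_rep}) is the Aleksandrov--Clark measure of $\mathfrak s$: this is precisely what (\ref{still_rep}), read together with the identity $(1+\mathfrak s(\lambda))^{-1}-(1+\mathfrak s(\overline\lambda))^{-1}=\Re\bigl\{(1-\mathfrak s(\lambda))(1+\mathfrak s(\lambda))^{-1}\bigr\}$ and the Nevanlinna representation (\ref{Nev_rep}), asserts.

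Next I would make sense of the embedding $E\colon K_{\mathfrak s}\to L_2(\mathbb R,\pi d\mu)$ by boundary values. In general this relies on Poltoratski's theorem (\cite{Poltora}; see also \cite{Poltora_Sarason,Nikolski}) that every function in the model space $K_{\mathfrak s}$ has finite nontangential limits $\mu$-almost everywhere; in the case at hand it is in fact elementary, since $\mathfrak s$ continues meromorphically across $\mathbb R$ with poles and zeros confined to a discrete subset of $\mathbb C\setminus\mathbb R$, so the boundary values are genuine values of the continued functions off a $\mu$-null set. With $E$ thus well defined and linear, the displayed identity reads $\|u\|_H^2=\pi\int_{\mathbb R}|g_+(t)|^2\,d\mu(t)=\|(E\circ V)u\|_{L_2(\mathbb R,\pi d\mu)}^2$, so $E\circ V$ is an isometry of $H$ into $L_2(\mathbb R,\pi d\mu)$; by construction it is the composition of the first map in (\ref{unit_form}) with the boundary-value embedding of $K_{\mathfrak s}$, exactly as claimed in the theorem.

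To upgrade ``isometry into'' to the statement that $H$ is isometrically isomorphic to all of $L_2(\mathbb R,\pi d\mu)$, I would show that $E(K_{\mathfrak s})$ is dense --- equivalently, that $E$ is onto, which is the surjectivity half of Clark's theorem. One may simply cite \cite{Clark,Aleksandrov_pseudocontinuable}; alternatively, and in keeping with the remark opening this section, one extracts it from Theorem \ref{thm:rhyzhov2.5}, which shows that $E\circ V$ carries the resolvent vectors $R_0(\lambda)u$, $\lambda\notin\mathbb R$, to multiples of the Cauchy kernels $(\cdot-\lambda)^{-1}$ and the associated reproducing kernels in $L_2(\mathbb R,\pi d\mu)$, whose linear span is dense there --- the thread taken up in Section \ref{sec_fin}.

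The main difficulty in the whole argument is not a single estimate in this last step but the justification of the limit $\delta\to0$ in (\ref{pre_eps}), carried out in the lead-up to the theorem: one pairs the density $g_+(t)\overline{f_+(t)}$ against the Poisson transforms $\pi\mathfrak P_\delta(\mu)$ of (\ref{still_rep}) and invokes their weak-$*$ convergence as $\delta\to0$; since the density a priori lies only in $L^1(dt)$ rather than being continuous or compactly supported, the interchange of limits rests on the regularity of $K_{\mathfrak s}$-functions noted above together with uniform-in-$N$ control of the $o(1)$ remainders from (\ref{eq:my-unbounded-cauchy}) and (\ref{eq:my-first-parenthesis}). Granting that, the theorem itself is a matter of assembling the pieces, the only genuinely non-elementary external input being the surjectivity of $E$, which can be imported verbatim from Clark's theorem.
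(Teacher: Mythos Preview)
Your proposal is correct and follows the paper's approach closely. The paper's ``proof'' is precisely the calculation in Section~\ref{Clark_constr} culminating in the identity $\langle u,v\rangle_H=\pi\int_{\mathbb R}g_+\overline{f_+}\,d\mu$, after which it simply declares the theorem established; you recognise this and correctly describe the remainder as bookkeeping of the maps $V$ and $E$. Your identification of the $\delta\to0$ passage in (\ref{pre_eps}) as the genuine technical point is on the mark, and matches the paper's appeal to weak-$*$ convergence of Poisson transforms combined with the regularity of $K_{\mathfrak s}$-functions.

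The one place you go beyond the paper is surjectivity. The paper does not prove it in the body of Section~\ref{Clark_constr}; instead, the Remark immediately after the theorem notes that an alternative proof follows from Clark~\cite{Clark} and Poltoratski~\cite{Poltora}, which is exactly the citation route you list as one option. Your second option --- extracting surjectivity from the resolvent formula and density of Cauchy kernels --- is in the spirit of Section~4.2 but is not how the paper itself closes the gap; it is a reasonable alternative, though making it self-contained requires a little more than you sketch (one needs that the images of resolvent vectors span $L_2(\mathbb R,\pi d\mu)$, which ultimately still rests on a density statement equivalent to Clark's).
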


%%%{\color{magenta} Is this related to the canonical representation?}

%%Then going back main to calculation
%%\[\int\limits_{-N}^N dt \left \langle \left(\begin{array}{cc}I&S^*\\S&I\end{array}\right)\left(\begin{array}{c}\tilde
%%	g\\g\end{array}\right), \left(\begin{array}{c}\tilde f\\f\end{array}\right)\right\rangle_{E\oplus E}=\bar o (1)+
%%2\pi \int\limits_{-N}^N\mathcal{P}_\delta [d\mu](t)\left\langle g_+(t),f_+(t)\right\rangle_E dt,\]

%%%{\color{red}$\left\langle g_+(t),f_+(t)\right\rangle_E \in L_1$ and in 
%%%limit
%%%when $\delta\rightarrow 0$ it tends to
%%%$2\pi \int\limits^N_{-N} \left\langle g_+(t),f_+(t)\right\rangle_E d
%%%\mu(t)$.
%%%}

%%%{\color{red}This is a statement about what the space is transformed to. What about the operator? -- Similarly to the above, by contour integration}

\begin{remark}
%%%Measures $\mu$ with a property similar to the one described in the above theorem 
%%(\ref{inner_uv}) 
%%%appeared in the work \cite{Clark}. 
%%%The statement of the theorem pertaining to 
A. Unlike in \cite{Clark}, here the Clark measure $d\mu$ emerges in the context of extensions of symmetric operators, via the operators of the functional model. 

B. Theorem \ref{first_thm} admits an alternative proof by combining classical results by Clark \cite{Clark}, concerning the isometry between $K_{\mathfrak s}$ and $L_2\bigl(\mathbb{R}, \pi d\mu\bigr),$ and by Poltoratski \cite{Poltora} (see also the survey \cite{Poltora_Sarason}), concerning the realisation of the mentioned isometry via passing to the boundary values on the real line.
%%, as mentioned above. 
%%%fact that the spaces $K_{\mathfrak s},$ $K^\dagger_{\mathfrak s}$ are unitary equivalent to the $L^2$-space with respect to the Clark measure.
%%%, whose role in our case is played by the measure $d\mu$.
\end{remark}

%%{\color{red} We need to prove next, following a procedure similar to the above, that the operator is coverted into what it should be converted into.}

\subsection{The resolvent as an operator of multiplication by the independent variable}

%According to the Dunford-Schwartz functional calculus, one hast
Fix $\varkappa\in{\mathbb R},$ $z\in{\mathbb C}^+\cup{\mathbb C}^-,$ and consider $\delta\in(0, |\Im z|)$ and $N\in{\mathbb R}$ as described at the beginning of Section \ref{Clark_constr}.  Similarly to the above, we write
\begin{equation}
	R_\varkappa(z)P_N=-\frac 1{2\pi{\rm i}}
	%%\lim_{N\to\infty}
	%%\lim_{\delta\to 0}
	\ointctrclockwise_{\Gamma_{\delta, N}} (z-\lambda)^{-1}R_\varkappa (\lambda) d\lambda
	=\frac 1{2\pi{\rm i}}
	%%\lim_{N\to\infty}
	%%\lim_{\delta\to 0}
	\ointclockwise_{\Gamma_{\delta, N}}(z-\lambda)^{-1}R_\varkappa (\lambda) d\lambda,
	\label{contour}
\end{equation}
where $\Gamma_{\delta, N}$ is the boundary of the rectangle (\ref{rect}) and the integrals are understood in the same sense as (\ref{first-contour}).
%for $0<\delta<|\Im z|,$ $N>0,$  is the boundary of the rectangle 
%%\begin{equation}
%%\{\zeta\in{\mathbb C}: |\Re\zeta|<N, |\Im\zeta|<\delta\},
%%\label{rect}
%%\end{equation}
%%%traced anticlockwise in the first integral in (\ref{contour}) and clockwise in the second integral in (\ref{contour}). 
%%%In what follows, for each $\delta$ as above and $\lambda$ such that $\Im\lambda=\delta$ we use the notation
%%\begin{equation*}
	%%\label{eq:lambda-expression}
%%	t:=\lambda-{\rm i}\delta=\Re\lambda\in \mathbb{R}.
%%\end{equation*}
Using (\ref{contour}), we can write 
\begin{equation}
	\label{eq:unbounded-cauchy}
	\bigl\langle R_\varkappa(z)u_N, v_N\bigr\rangle_H=\frac 1{2\pi {\rm i}}\int\limits^{N}_{-N}\bigl\langle
	\bigl\{(z-\lambda)^{-1}R_\varkappa(\lambda)-
	(z-\overline{\lambda})^{-1}R_\varkappa(\overline{\lambda})\bigr\}u,v
	\bigr\rangle_H dt
	\qquad\forall u,v\in H.
\end{equation}
Assuming $(\tilde{g}, g)^\top,$  $(\tilde{f}, f)^\top\in K$,
let $u=\Phi^{*}(\tilde{g}, g)^\top$ and
$v=\Phi^{*}(\tilde{f}, f)^\top.$ Then one has ({\it cf.} (\ref{my-last-equality}))
\begin{align}
	\bigl\langle\bigl\{	(z&-\lambda)^{-1}R_\varkappa(\lambda)- 	(z-\overline{\lambda})^{-1}R_{\varkappa}(\overline{\lambda})\bigr\}u,v\bigr\rangle_H\nonumber\\[0.5em]
	&=\left\langle\frac{1}{z-\lambda}\,\frac 1{\cdot-\lambda}
	\left(\begin{array}{c}\tilde{g}-\chi_{\varkappa}^-\widehat\theta_\varkappa^{-1}(z)({\mathfrak s}\tilde g+g)(z)\\g\end{array}\right),
	\left(\begin{array}{c}\tilde{f}\\f\end{array}\right)\right\rangle_{\mathfrak H}
	\nonumber
	\\[0.4em]
	&-\left\langle\frac{1}{z-\overline{\lambda}}\,\frac 1{\cdot-\overline{\lambda}}
	\left(\begin{array}{c}\tilde{g}\\g-\chi_{\varkappa}^*\theta_\varkappa^{-1}(z)(\tilde{g}+\overline{\mathfrak s}g)(z)\end{array}\right),
	\left(\begin{array}{c}\tilde{f}\\f\end{array}\right)\right\rangle_{\mathfrak H}
	\nonumber
	\\[0.4em]
	&
	%%=\Biggl\{\left\langle \frac{1}{z-\lambda}
	%%\frac %%1{\cdot-\lambda}\left(\begin{array}{c}\tilde{g}\\g\end{array}\right),
	%%\left(\begin{array}{c}\tilde{f}\\f\end{array}\right)\right\rangle_{\mathfrak H}-\left\langle \frac{1}{z-\overline{\lambda}}\frac 1{\cdot-\overline{\lambda}}\left(\begin{array}{c}\tilde{g}\\g\end{array}\right),
	%%\left(\begin{array}{c}\tilde{f}\\f\end{array}\right)\right\rangle_{\mathfrak H}\Biggr\}
	=\left\langle\left(\frac{1}{z-\lambda}\,\frac 1{\cdot-\lambda}-\frac{1}{z-\overline{\lambda}}\,\frac
	1{\cdot-\overline{\lambda}}\right)\left(\begin{array}{c}\tilde{g}\\g\end{array}\right),
	\left(\begin{array}{c}\tilde{f}\\f\end{array}\right)\right\rangle_{\mathfrak H}
	-\widetilde{\mathcal{G}}(\lambda,\overline{\lambda}),
	\label{last-equality}
\end{align}
where
\begin{equation}
\begin{aligned}
	\widetilde{\mathcal{G}}(\lambda,\overline{\lambda})
	&:=\frac{1}{z-\lambda}\int_{\mathbb R}\frac{1}{k-\lambda}\chi_\varkappa^-\widehat{\theta}_{\varkappa}^{-1}(\lambda)g_+(\lambda)
	\overline{f_-(k)}dk
	%{\phantom \right\}}
	%%\\[0.45em]
	%%&\hspace{15mm}
	-\frac{1}{z-\overline{\lambda}}\int_{\mathbb R}\frac 1{k-\overline{\lambda}}\chi_\varkappa^+\theta_{\varkappa}^{-1}(\overline{\lambda})g_-(\overline{\lambda})\overline{f_+(k)} dk
	\\[0.45em]
	&=2\pi{\rm i}\biggl\{\frac{1}{z-\lambda}\chi_\varkappa^-\widehat\theta_{\varkappa}^{-1}(\lambda)g_+(\lambda)
		\overline{f_-(\overline{\lambda})}
		%%\\[0.4em]
		%%&\hspace{15mm}
		+\frac{1}{z-\overline{\lambda}} \chi_\varkappa^+\theta_{\varkappa}^{-1}(\overline{\lambda})g_-(\overline{\lambda})
		\overline{f_+(\lambda)}\biggr\}.
	\end{aligned}
\label{gcalc}
\end{equation}
Here, for the first equality we have used the identities (\ref{hid}).
%%the we use the fact that for any function $f(\cdot)$ in $H^\pm_2,$ the following identities hold:
%%\[
%%\int_\mathbb{R} \frac{f(k)}{k-\lambda}dk
%%=2\pi i f(\lambda)
%%\quad \left(\int_\mathbb{R} ^^\frac{f(k)}{k-\overline{\lambda}}dk
%%=\pm 2\pi{\rm i}f(\lambda)
%%\quad \forall\lambda \in \mathbb{C}^\pm.
%%\]
%%%\[\int_\mathbb{R} \frac{f(k)}{k-\lambda}dk=0\quad \left(\int_\mathbb{R} \frac{f(k)}{k-\overline{\lambda}}dk=0\right)
%%$$\quad \forall \, \lambda \in \mathbb{C}^-\,,
%%\]
%%=\left\langle\frac 1{\cdot-\lambda}\chi_{\varkappa}^-\widehat\theta_{\varkappa}^{-1}(\lambda)\left({\mathfrak s}\tilde g+g\right)(\lambda),
%%	\tilde{f}-P_+\left(\tilde f +\overline{\mathfrak s}f\right)\right\rangle\\
%%	&-\left\langle\frac 1{\cdot-\overline{\lambda}}\chi_{\varkappa}^+\theta_{\varkappa}^{-1}(\overline{\lambda})\left(\tilde{g}+\overline{\mathfrak s}g\right){\overline{\lambda}},
%%	f-P_-\left({\mathfrak s}\tilde f +f\right)\right\rangle
%Note that ${\mathfrak s}\tilde g+g \in H^2_+(E)$, $\tilde g+\overline{\mathfrak s}g \in H^2_-(E)$ and
%hence their boundary values
%%(?) 
%are in $L_2(E)$. 
The first term in
\eqref{last-equality} can be re-written as follows:
\begin{equation}
\begin{aligned}
	\biggl\langle\biggl(\frac{1}{z-\lambda}\,\frac 1{\cdot-\lambda}&-\frac{1}{z-\overline{\lambda}}\,\frac
	1{\cdot-\overline{\lambda}}\biggr)\left(\begin{array}{c}\tilde{g}\\g\end{array}\right),
	\left(\begin{array}{c}\tilde{f}\\f\end{array}\right)\biggr\rangle_{\mathfrak H}\\[0.4em]  
	&=\int_{-\infty}^{\infty}\frac{1}{k-z}\biggl\{\frac
	{2{\rm i}\delta}{(k-t)^2+\delta^2}-
	\frac
	{2{\rm i}\delta}{(z-t)^2+\delta^2}\biggr\}\\[0.4em]
	&\hspace{3cm}\times\left\langle
	\left(\begin{array}{cc}1&\overline{{\mathfrak s}(k)}\\
		{\mathfrak s}(k)&1\end{array}\right)\left(\begin{array}{c}\tilde g(k)\\ g(k)\end{array}\right),
	\left(\begin{array}{c}\tilde f(k)\\ f(k)\end{array}\right)\right\rangle_{{\mathbb C}^2}dk.	
\end{aligned}
\label{eq:first-parenthesis}
\end{equation}
Similarly to the calculation (\ref{eq:my-first-parenthesis}), for the integral of the expression (\ref{eq:first-parenthesis}) with respect to $t\in[-N, N]$ we obtain 
%%converges as $\delta\to0$ to 
\begin{align*}
\int_{-N}^N\biggl\langle\biggl(\frac{1}{z-\lambda}\,\frac 1{\cdot-\lambda}&-\frac{1}{z-\overline{\lambda}}\,\frac
1{\cdot-\overline{\lambda}}\biggr)\left(\begin{array}{c}\tilde{g}\\g\end{array}\right),
\left(\begin{array}{c}\tilde{f}\\f\end{array}\right)
\biggr\rangle_{\mathfrak H}dt\\[0.4em]
&=2\pi{\rm i}\int_{-N}^{N}\frac{1}{k-z}\left\langle
\left(\begin{array}{cc}1&\overline{{\mathfrak s}(k)}\\
	{\mathfrak s}(k)&1\end{array}\right)\left(\begin{array}{c}\tilde g(k)\\ g(k)\end{array}\right),
\left(\begin{array}{c}\tilde f(k)\\ f(k)\end{array}\right)\right\rangle_{{\mathbb C}^2}dk+o(1)\\[0.4em]
&=-2\pi{\rm i}\int_{-N}^N\frac{g_+ (t)
	\overline{f_+(t)}}{z-t}dt+o(1).
\end{align*}
%%uniformly in $\delta.$
 Combining this with (\ref{eq:unbounded-cauchy}), (\ref{last-equality}), and (\ref{gcalc}), we obtain
\begin{align*}
		\bigl\langle R_\varkappa(z)u_N, v_N\bigr\rangle_H&+\int_{-N}^N\frac{g_+ (t)
			\overline{f_+(t)}}{z-t}dt=-\frac 1{2\pi
		{\rm i}}
	%%\lim_{N\to\infty}\lim_{\delta\to 0}
	%%\lim_{\delta\to0}
	\int\limits_{-N}^{N}\widetilde{\mathcal{G}}(\lambda,\overline{\lambda})dt+o(1)\\
	    &=
	    %%\lim_{N\to\infty}\lim_{\delta\to 0}
	    %%\lim_{\delta\to0}
	    \int\limits_{-N}^{N}\biggl\{\frac{1}{z-\lambda}\chi_\varkappa^-\widehat\theta_{\varkappa}^{-1}(\lambda) g_+ (\lambda)\overline{f_-(\overline{\lambda})}+\frac{1}{z-\overline{\lambda}}
	\chi_\varkappa^+\theta_{\varkappa}^{-1}(\overline{\lambda}) g_-(\overline{\lambda})\overline{f_+(\lambda)}\Bigr\}dt+o(1).
	\end{align*}
Setting $\varkappa=0$ (which corresponds to the operator ${\mathcal A}^\tau_{\rm hom},$ $\tau\in[-\pi,\pi),$ introduced in Section \ref{section:examples}) and using the identities (\ref{fg_id}) yields	
\begin{equation}
\begin{aligned}	
\bigl\langle R_0(z)u_N, v_N\bigr\rangle_H&+\int_{-N}^N\frac{g_+ (t)
	\overline{f_+(t)}}{z-t}dt\\[0.4em]
&=
%%\lim_{N\to\infty}\lim_{\delta\to 0}
%%\lim_{\delta\to 0}
\int\limits_{-N}^{N}\biggl\{\frac{1}{z-\lambda}\frac{{\mathfrak s}(\lambda)}{1+{\mathfrak s}(\lambda)} g_+ (\lambda)
\overline{f_+(\lambda)}
%%\hspace{+50pt}
+\frac{1}{z-\overline{\lambda}}\frac{\overline{{\mathfrak s}(\lambda)}}{1+\overline{{\mathfrak s}(\lambda)}}g_+ (\lambda)\overline{f_+(\lambda)}\biggr\}dt+o(1).
\end{aligned}
\label{Rlim}
\end{equation}

Using the identity (\ref{Sid}), we rewrite (\ref{Rlim}) as follows:
%%and therefore ({\color{blue} using analyticity in the rectangle as shown above})
\begin{equation*}
	\begin{aligned}	
		&\bigl\langle R_0(z)u_N, v_N\bigr\rangle_H+\int_{-N}^N\frac{g_+ (t)
			\overline{f_+(t)}}{z-t}dt\\
		&=
		%%\lim_{\delta\to 0}
		%%\lim_{N\to\infty}\lim_{\delta\to 0}
		\int\limits_{-N}^{N}\biggl\{\frac{1}{z-\lambda}g_+ (\lambda)
		\overline{f_+(\lambda)}-\frac{1}{z-\lambda}\frac{1}{1+{\mathfrak s}(\lambda)} g_+ (\lambda)
		\overline{f_+(\lambda)}
		%%\\
		%%&\hspace{+150pt}
		%%+\frac{2}{z-\overline{\lambda}}g_+ (\overline{\lambda})\overline{f_+(\lambda)}
		+\frac{1}{z-\overline{\lambda}}\frac{\overline{{\mathfrak s}(\lambda)}}{1+\overline{{\mathfrak s}(\lambda)}}g_+ (\lambda)\overline{f_+(\lambda)}\biggr\}dt+o(1)\\
		&=\int_{-N}^N\frac{g_+ (t)
		\overline{f_+(t)}}{z-t}dt
	%\\
	   %&
	   %%-\lim_{N\to\infty}\lim_{\delta\to 0}
	   %%-\lim_{\delta\to 0}
	   -\int\limits_{-N}^{N}\frac{1}{z-t}\biggl\{\frac{1}{1+{\mathfrak s}(\lambda)}-\frac{\overline{{\mathfrak s}(\lambda)}}{1+\overline{{\mathfrak s}(\lambda)}}\biggr\}g_+(t)\overline{f_+(t)}dt+o(1).
	\end{aligned}
	%\label{Rlim1}
\end{equation*}

%%%\[=-2\pi i\left\langle g_+(\lambda),f_+(\overline{\lambda})\right\rangle+
%%%\pi i\left\langle\left(\frac{1+{\mathfrak s}(\lambda)}{2}\right)^{-1} g_+ (\lambda),
%%%f_+(\overline{\lambda})\right\rangle-\pi i
%%%\left\langle\frac{2}{1+{\mathfrak s}(\lambda)} g_- %%%(\overline{\lambda}), f_+(\lambda)\right\rangle=\]

Choosing, for each $N,$ a value $\delta_N<|\Im z|$ such that 
%%for any $\delta<\delta_N$ 
the rectangle (cf. (\ref{rect}))
%%  $t\in[-N,N]\times [-\delta,\delta]$ of sufficient
%%small width there are
\begin{equation*}
	\{\zeta\in{\mathbb C}: |\Re\zeta|<N,\  |\Im\zeta|<\delta_N\}
%	\label{rect}
\end{equation*} 
contains no poles or zeros of ${\mathfrak s}$  and using the identity (\ref{another_sid}), we therefore have, for all $\delta<\delta_N,$
%%%\[-2 \pi i\left \langle \left(\begin{array}{cc}I&S^*\\S&I\end{array}\right)\left(\begin{array}{c}\tilde
%%%	g\\g\end{array}\right), \left(\begin{array}{c}\tilde f\\f\end{array}\right)\right\rangle_{E\oplus E}(\lambda)+
%%%\bar o (1)+2 \pi i \left\langle\left[(1+{\mathfrak s}(\lambda))^{-1}-(1+{\mathfrak s}(\lambda))^{-1}{\mathfrak s}(\lambda)\right]g_+(t), f_+(t)\right\rangle=\]
%%%as 
\begin{align*}
	\bigl\langle R_0(z)u_N, v_N\bigr\rangle_H=
	%%\lim_{N\to\infty}\lim_{\delta\to 0}
	%\lim_{\delta\to 0}
	\int\limits_{-N}^{N}
	\frac{1}{t-z}\biggl\{\frac{1}{1+{\mathfrak s}(\lambda)}-\frac{1}{1+{\mathfrak s}(\overline{\lambda})}\biggr\}g_+(t)\overline{f_+(t)}dt+o(1).
\end{align*}
Combining this with the representation (\ref{still_rep}) and passing to the limit as $\delta\to0$ yields
\[
\bigl\langle R_0(z)u_N, v_N\bigr\rangle_H=\pi\int_{-N}^N\frac{g_+ (t)
	\overline{f_+(t)}}{t-z}d\mu(t)+o(1).
\]
Finally, passing to the limit as $N\to\infty,$ we obtain 
\[
\bigl\langle R_0(z)u, v\bigr\rangle_H=\pi\int_{-\infty}^\infty \frac{g_+ (t)
	\overline{f_+(t)}}{t-z}d\mu(t).
\]

	We have thus established the following theorem.
	\begin{theorem}
	 Under the isometry described in Theorem \ref{first_thm}, the resolvent $(A_0-z)^{-1}$ is unitarily equivalent to the operator of
		multiplication by $(\cdot-z)^{-1}$ in the space $L_2(\mathbb{R}, \pi d\mu)$. 
		%%%%(However we should prove that it acts on the whole $L_2(\mathbb{R}, \pi d\mu(t))$, not on its part.)
		%%The Hilbert space $H$ is isometric to the space 
	%%	$L_2\bigl(\mathbb{R}, \pi d\mu(t)\bigr),$ where the measure $\mu$ is given by (\ref{stil_rep}). Now link this to (\ref{unit_form}).
	\end{theorem}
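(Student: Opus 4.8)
The plan is to reuse almost verbatim the contour-integral machinery already developed for the proof of Theorem \ref{first_thm}, adapting it to carry the extra resolvent factor $(\cdot-z)^{-1}$. Since the statement asserts that $(A_0-z)^{-1}$ becomes multiplication by $(\cdot-z)^{-1}$ under the isometry of Theorem \ref{first_thm}, and that isometry identifies $u\in H$ with $g_+\in K_{\mathfrak s}\subset L_2(\mathbb R,\pi d\mu)$, it suffices to establish the bilinear identity
\[
\bigl\langle (A_0-z)^{-1}u, v\bigr\rangle_H=\pi\int_{-\infty}^{\infty}\frac{g_+(t)\overline{f_+(t)}}{t-z}\,d\mu(t),\qquad u,v\in H,
\]
for all $z\in\complex_+\cup\complex_-$, where $g_+,f_+$ are the images of $u,v$. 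First I would fix $z$ off the real line, choose $\delta<|\Im z|$ and $N$ avoiding the spectrum, and write $R_0(z)P_N$ as the contour integral \eqref{contour} of $(z-\lambda)^{-1}R_0(\lambda)$ over the rectangle boundary $\Gamma_{\delta,N}$, collapsing it to the integral \eqref{eq:unbounded-cauchy} over $[-N,N]$ of the two-sided difference.

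Next I would substitute the functional-model resolvent formulas \eqref{representation_minus}--\eqref{representation_plus} into the integrand, exactly as in \eqref{last-equality}, splitting it into a ``free'' Toeplitz-type piece \eqref{eq:first-parenthesis} plus the correction $\widetilde{\mathcal G}(\lambda,\overline\lambda)$ given by \eqref{gcalc}. For the free piece, integrating in $t\in[-N,N]$ and using the Cauchy identities \eqref{hid} together with the approximate-identity property of the Poisson kernel yields $-2\pi{\rm i}\int_{-N}^N (z-t)^{-1}g_+(t)\overline{f_+(t)}\,dt+o(1)$, paralleling \eqref{eq:my-first-parenthesis} but with the extra factor $(k-z)^{-1}$. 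For the correction term, setting $\varkappa=0$ and invoking the continuation identities \eqref{fg_id} converts $f_-(\overline\lambda)$ and $g_-(\overline\lambda)$ into $\overline{\mathfrak s(\lambda)}f_+(\lambda)$-type expressions, producing the intermediate identity \eqref{Rlim}. Then the algebraic identities \eqref{Sid}, \eqref{another_sid} telescope the $\mathfrak s/(1+\mathfrak s)$ combination down to the single difference $(1+\mathfrak s(\lambda))^{-1}-(1+\mathfrak s(\overline\lambda))^{-1}$, at which point the Nevanlinna/Clark representation \eqref{still_rep} identifies this with $\pi\mathfrak P_\delta(\mu)(t)$. Passing to the limit $\delta\to0$ (weak-$*$ convergence of $\mathfrak P_\delta$, regularity of $K_{\mathfrak s}$) and then $N\to\infty$ (with $u_N\to u$, $v_N\to v$) gives the claimed integral formula, and comparing with the identification $u\leftrightarrow g_+$ of Theorem \ref{first_thm} completes the proof.

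The main obstacle I anticipate is purely technical rather than conceptual: controlling the error terms uniformly as $\delta\to 0$ in the presence of the \emph{two} singular factors $(k-z)^{-1}$ and $(z-t)^{-1}$ inside the double integral over $\Gamma_{\delta,N}$, in particular verifying that the $o(1)$ in the analogue of \eqref{eq:first-parenthesis} is genuinely uniform in $N$ and that the poles/zeros of $\mathfrak s$ near the real line (known to be bounded away from it, since $A_{\rm i}$ is completely non-selfadjoint) do not interfere for $\delta<\delta_N$. A secondary point requiring care is the interchange of the $t$-integration with the $k$-integration when applying \eqref{hid}, which is justified by the $H^2_\pm$ membership established in \eqref{eq:naboko-thm-1} and the Hardy-space estimates; but since every one of these steps has an exact counterpart in the already-completed proof of Theorem \ref{first_thm}, the argument is essentially a transcription with the bookkeeping factor $(z-\cdot)^{-1}$ carried along throughout.
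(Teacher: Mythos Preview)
Your proposal is correct and follows essentially the same approach as the paper: the paper's proof proceeds exactly by writing $R_0(z)P_N$ as the contour integral \eqref{contour}, substituting the functional-model resolvent formulas to obtain the splitting \eqref{last-equality} with the correction \eqref{gcalc}, integrating the free piece via the Poisson-kernel argument, setting $\varkappa=0$ and invoking \eqref{fg_id}, \eqref{Sid}, \eqref{another_sid} to reduce to \eqref{still_rep}, and then passing to the limits $\delta\to0$, $N\to\infty$. The technical caveat you flag---taking $\delta<|\Im z|$ so that the additional singularity at $z$ stays outside the rectangle---is precisely the extra constraint the paper imposes at the outset of the argument.
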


%%{\color{red}
%%\[
%%\left(\frac {1+S(\lambda)}{2}\right)^{-1}-\left(\frac %%{1+S(\overline{\lambda})}{2}\right)^{-1}=-i\int\limits_{\mathbb{R}}\frac{d\mu(\sigma)2 i \delta}
%%{(\sigma-t)^2+\delta^2}=
%\]\[
%=
%%2\pi\mathcal{P}_\delta
%%(\mu)(t),
%%\] 
%%where $\mathcal{P}_\delta$ stands for the Poisson transformation %%{(\color{blue}Some standard reference?).} Then going back main to calculation
%%\[\int\limits_{-N}^N dt \left \langle \left(\begin{array}{cc}I&S^*\\S&I\end{array}\right)\left(\begin{array}{c}\tilde
%%	g\\g\end{array}\right), \left(\begin{array}{c}\tilde f\\f\end{array}\right)\right\rangle_{E\oplus E}=\bar o (1)+
%%2\pi \int\limits_{-N}^N\mathcal{P}_\delta [d\mu](t)\left\langle g_+(t),f_+(t)\right\rangle_E dt,\]
%%$\left\langle g_+(t),f_+(t)\right\rangle_E \in L_1$ and in limit
%%when $\delta\rightarrow 0$ it tends to
%%$2\pi \int\limits^N_{-N} \left\langle g_+(t),f_+(t)\right\rangle_E d
%%\mu(t)$.
%%}

\section{Application to high-contrast homogenisation: an explicit functional model representation}
\label{sec_fin}

Substituting the expression (\ref{Mfunction_expl}) into (\ref{SviaM}) and using the Stieltjes inversion formula, see, e.g., \cite[p.\,9]{AG}, \cite[Section 5.4]{MR822228}, we infer that $\mu$ is a counting measure with masses located at the poles $\lambda=\lambda_j,$ $j=1,2,\dots,$ of the expression ($\lambda\in{\mathbb R}_+$)
\begin{equation}
	\begin{aligned}
\frac{1}{1+{\mathfrak s}(\lambda)}=\frac{1}{2}\biggl(1+\frac{\rm i}{{\mathfrak m}(\lambda)}\biggr)&=\frac{1}{2}-\frac{{\rm i}\sqrt{\lambda}\bigl(\Re({\rm e}^{{\rm i}\tau l}\overline{\omega})-\cos\sqrt{\lambda}l\bigr)(\eta^2\lambda-\gamma)}{(\eta^2\lambda-\gamma)\sin\sqrt{\lambda}l+2\sqrt{\lambda}\bigl(\Re({\rm e}^{{\rm i}\tau l}\overline{\omega})-\cos\sqrt{\lambda}l\bigr)}\\
&=C_0+C_1\lambda-\frac{\rm i}{2}\int\limits_{\mathbb{R}} \frac{d\mu(\sigma)}{\lambda-\sigma},
\end{aligned}
\label{denom}
\end{equation}
where $C_0, C_1$ are defined via (\ref{Nev_rep}). Clearly, these solve the transcendental equation for $z=\lambda_j$ obtained by setting to zero the denominator in (\ref{denom}):
\begin{equation}
\cos\sqrt{\lambda_j}l-(\eta^2\lambda_j-\gamma)\frac{\sin\sqrt{\lambda_j}l}{2\sqrt{\lambda_j}}=\Re\bigl({\rm e}^{{\rm i}\tau l}\overline{\omega}\bigr),\qquad j=1,2,\dots
\label{dispersion_fin}
\end{equation}
The corresponding mass 
%%$\mu(\{\lambda_j\}),$ $j=1,2,dots,$ 
is given by evaluating the residue of the expression (\ref{denom}) at the pole $\lambda_j:$
\[
\mu(\{\lambda_j\})=\frac{2(\eta^2\lambda_j-\gamma)^2}{\eta^2+\dfrac{\gamma}{\lambda_j}+2l+\dfrac{l}{\sqrt{\lambda_j}}(\eta^2\lambda_j-\gamma)\cot\sqrt{\lambda_j}l}.
\]

Using the values (\ref{parameters}), one immediately obtains a representation for the resolvent $({\mathcal A}^\tau_{\rm hom}-z)^{-1}$ of the operator ${\mathcal A}^\tau_{\rm hom}$ introduced in Section \ref{section:examples} as the operator of multiplication by $(\cdot-z)^{-1}$ in $L^2(\mathbb{R}, \pi d\mu).$ 
%%an appropriate ``model" function space. 
In this context the measure $\mu$ is parametrised by $\varepsilon$ and $\tau\in[-\pi,\pi).$ In fact, it shows a ``two-scale" dependence on the quasimomentum, being a function  $\tau$ and $\tau/\varepsilon$ only: the equation (\ref{dispersion_fin}) reads
\[
\cos\sqrt{\lambda_j}l_2-\biggl\{(l_1+l_3)\lambda_j-\biggl(\frac{l_1}{a_1}+\frac{l_3}{a_3}\biggr)^{-1}\biggl(\frac{\tau}{\varepsilon}\biggr)^2\biggr\}\frac{\sin\sqrt{\lambda_j}l_2}{2\sqrt{\lambda_j}}
=\frac{\dfrac{a_1}{l_1}\cos\tau+\dfrac{a_3}{l_3}}
{\sqrt{\biggl(\dfrac{a_1}{l_1}\cos\tau+\dfrac{a_3}{l_3}\biggr)^2+\dfrac{a_1^2}{l_1^2}\sin^2\tau}}.
%%%\Re\left(\frac{\dfrac{a_1}{l_1}{\rm e}^{{\rm i}\tau}+\dfrac{a_3}{l_3}}
%%%{\biggl|\dfrac{a_1}{l_1}{\rm e}^{{\rm i}\tau}+\dfrac{a_3}{l_3}\biggr|}\right),
\]
where we have used the assumption that $l_1+l_2+l_3=1.$ In the particular case when $a_1=a_3=a,$ $l_1=l_3=(1-l_2)/2,$ it takes a more compact form, as follows:
\[
\cos\sqrt{\lambda_j}l_2-\biggl\{(1-l_2)\lambda_j-\frac{a}{1-l_2}\biggl(\frac{\tau}{\varepsilon}\biggr)^2\biggr\}\frac{\sin\sqrt{\lambda_j}l_2}{2\sqrt{\lambda_j}}=|\cos\tau|, \qquad j=1,2,\dots
\]

Apart from the usual implications of an explicit functional model representation thus constructed on the spectral analysis of the operator ${\mathcal A}^\tau_{\rm hom}$, we have obtained a special (``spectral") representation for the generalised resolvent (in the form of an explicit pseudodifferential operator) 
\[
{\mathcal R}^\tau_{\rm hom}(z)=\mathcal{P}({\mathcal A}^\tau_{\rm hom}-z)^{-1}\big|_{L^2(0, l_2)},
\]
for which the operator ${\mathcal A}_{\rm hom}^\tau$  serves as the Neumark-\v{S}trauss dilation. Here $\mathcal{P}$ is the natural orthogonal projection of $L^2(0, l_2)\oplus{\mathbb C}$ onto  $L^2(0, l_2).$ 

When considered in the context of ``multipole" homogenisation  representations, this will allow us to demonstrate ``metamaterial" properties, in particular antiparallel group and phase velocities. These multipole representations will of course require that one passes from the ``scalar" context (where the key objects involved, i.e., the $M$-function $\mathfrak m$ and the characteristic function $\mathfrak{s}$) to a ``matrix" one. The details of the related argument will appear in forthcoming publication.
%% This operation of transition for the dipole to a

%%{\color{blue} Briefly, conclusions and further applications of the representation of the operator as the operator multiplication: scattering, von Neumann type estimates for functions of the operator, etc.}

\section*{Acknowledgements}

%We express deep gratitude to Professor Sergey Naboko for his reading the manuscript and making a number of extremely valuable remarks.

KDC is grateful for the financial support of
%has been supported by
%the Leverhulme Trust (Grant RPG--167 ``Dissipative and non-self-adjoint problems'') and
EPSRC Grants EP/L018802/2, EP/V013025/1. YYE and SNN acknowledge financial support by the Russian Science Foundation Grant No.\,20-11-20032. KDC and YYE have been partially supported by CONACyT CF-2019 No.\,304005.

We are grateful to Dr A.\,V.\,Kiselev for reading the paper and providing a number of insightful comments. 

%partially supported
%by UNAM-DGAPA-PAPIIT IN110818 and SEP-CONACYT CB-2015 254062.}

\end{document}